\documentclass[11pt,a4paper]{scrartcl}

\usepackage[centertags]{amsmath} 
\usepackage{color} 
\usepackage{amsfonts}
\usepackage{amssymb}
\usepackage{amsthm}
\usepackage{epsfig} 
\usepackage{rotating}
\usepackage{psfrag}
\usepackage{caption}
\usepackage{booktabs}
\usepackage{enumitem}
\usepackage{pdflscape}

\newcommand{\R}{{\mathbb R}} 
\newcommand{\Nat}{\mathbb{N}} 

\newcommand{\X}{\mathcal{X}} 
\newcommand{\Y}{\mathcal{Y}} 
\newcommand{\U}{\mathcal{U}} 
\newcommand{\B}{\mathcal{B}} 
\newcommand{\E}{\mathcal{E}} 
\newcommand{\C}{\mathcal{C}} 
\newcommand{\D}{\mathcal{D}} 
\newcommand{\T}{\mathcal{T}} 
\newcommand{\Q}{\mathcal{Q}} 
\renewcommand{\S}{\mathcal{S}}

\newcommand{\interior}{\mathrm{int}}   
 
\newcommand{\lb}[1]{\underline{#1}} 
\newcommand{\ub}[1]{\overline{#1}}

\newtheorem{theorem}{Theorem}
\newtheorem{corollary}[theorem]{Corollary}
\newtheorem{lemma}[theorem]{Lemma}

\newtheorem{assumption}{Assumption}
\newtheorem{remark}{Remark}
\newtheorem{definition}{Definition}

\captionsetup{format=plain,margin=10pt,font=normalsize,labelfont={bf,sf}}

\begin{document}

\begin{center}
{\LARGE
\bf
\textsf{On the computation of $\lambda$-contractive sets\\[2mm] for linear constrained systems}
}

\renewcommand{\thefootnote}{$\dagger$} 

\vspace{5mm}
{
Moritz Schulze Darup\footnotemark[1] and Mark Cannon\footnotemark[1]
}
\vspace{2mm}

  \footnotetext[1]{M. Schulze Darup and M. Cannon are with 
the Control Group, Department of Engineering Science,
        University of Oxford, Parks Road, Oxford OX1 3PJ, UK.
        E-mail: {\tt moritz.schulzedarup@rub.de}.}
\end{center}

\paragraph{Abstract.}
We present two theoretical results on the computation of $\lambda$-contractive sets for linear systems with state and input constraints. First, we show that it is possible to a priori compute a number of iterations that is sufficient to approximate the maximal $\lambda$-contractive set with a given precision using 1-step sets.  Second, based on the former result, we provide a procedure for choosing $\lambda$ so that the associated maximal $\lambda$-contractive set is guaranteed to approximate the maximal controlled invariant set with a given accuracy.

\paragraph{Keywords.}
Linear systems, constrained control, $\lambda$-contractive sets, geometric methods.

\paragraph{Preamble.}
This is a preprint of an article to appear in IEEE Transactions on Automatic control.

\section{Introduction  and Problem Statement}
\label{sec:Introduction}

The concept of $\lambda$-contraction is widely used in control theory (see~\cite{Blanchini2008} for an excellent overview).
For linear discrete-time systems 
\begin{equation}
\label{eq:linSys} 
x(k+1)=A\,x(k)+B\,u(k)
\end{equation}
with state and input constraints of the form
\begin{equation}
\label{eq:constraintsXU} 
x(k) \in \X  \,\,\, \text{and} \,\,\, u(k) \in \U  \quad \text{for every} \quad k \in \Nat,
\end{equation}
the construction, the application and many fundamental properties of
 $\lambda$-contractive sets are well-known (see, e.g., \cite{Blanchini2008,Blanchini1994}).
The computation of $\lambda$-contractive sets often builds on the repeated evaluation of the mapping
\begin{equation}
\label{eq:Q1Lambda}
\Q_1^\lambda(\D):=\{x \in \X \,|\, \exists u \in \U : Ax +Bu \in \lambda\,\D\}
\end{equation}
for a given set $\D\subset \R^n$  and a fixed $\lambda \in (0,1]$.
This leads to a sequence of sets defined according to $\Q_0^\lambda(\D)=\D$ and 
\begin{equation}
\label{eq:QkLambdaSequence}
\Q_{k+1}^\lambda(\D):=\Q_1^\lambda(\Q_k^\lambda(\D)) 
\end{equation}
for $k \in \Nat$. Sequences of the form~\eqref{eq:QkLambdaSequence} were first addressed for the special case $\lambda = 1$ (see \cite{Bertsekas1972,Cwikel1986,Gutman1987,Keerthi1987}). Later, the case of $\lambda \in (0,1)$, which is more relevant here, was considered (see, e.g., \cite{Blanchini1994}).
For the special choice $\D=\X$, it is well-known that~\eqref{eq:QkLambdaSequence} results in a sequence of nested sets that approximate the maximal $\lambda$-contractive set $\C_{\max}^\lambda$ (see Def.~\ref{def:lambdaContractive}) from outside, i.e.,
\begin{equation}
\label{eq:CLambdaMaxOuterApprox}
\C_{\max}^\lambda \subseteq \Q_{k+1}^\lambda(\X) \subseteq \Q_k^\lambda(\X)
\end{equation}
for every $k \in \Nat$ (see \cite[Thm. 3.1]{Blanchini1994}). 
Moreover, if  $\C_{\max}^\lambda$ is a C-set (see Def.~\ref{def:Cset}), which is the case if $\lambda$ is such that $0 \in \interior(\C_{\max}^\lambda)$ \cite[Rem.~4.1]{Blanchini1994},
the sequence $\{\Q_k^\lambda(\X)\}$ converges to $\C_{\max}^\lambda$
in the sense that, for every $\epsilon>0$, there exists a $k \in \Nat$ such that
\begin{equation}
\label{eq:QkLambda1PlusEpsilonCmaxLambda}
\Q_{k}^\lambda(\X) \subseteq (1+\epsilon)\,\C_{\max}^\lambda.
\end{equation}
While this characteristic is,  in principle, well-understood, there does (except for $\lambda=1$) not exist a method to a priori compute an upper bound for a suitable~$k$ such that~\eqref{eq:QkLambda1PlusEpsilonCmaxLambda} holds for a given $\epsilon$.
In this paper, we derive such a bound  by adapting and extending related results on the convergence of null-controllable sets given in \cite{Cwikel1986}.

The presented bound on $k$ allows the solution of another problem related to the computation of $\lambda$-contractive sets. It is well-known that $\lambda$-contractive sets can be used to approximate the maximal controlled invariant set $\C^1_{\max}$ arbitrarily closely (consider, e.g., \cite[Lem. 2.1 and Thm. 2.1]{Blanchini1996}). However, it is not clear how to a priori choose $\lambda \in (0,1)$ such that the relation
\begin{equation}
\label{eq:muC1MaxSubsetCLambdaMax}
\mu \, \C^1_{\max} \subseteq \C^\lambda_{\max}
\end{equation}
is guaranteed to hold for a given $\mu \in (0,1)$. We show how such a $\lambda$ can be computed \textit{before} evaluating (or approximating) $\C^1_{\max}$ or $ \C^\lambda_{\max}$.

\section{Notation and Preliminaries}

We begin by formalizing the notion of $\lambda$-contractive sets, controlled invariant sets, and C-sets. As a preparation, note that the scaling $\mu\,\C $ is understood as $\mu\,\C := \{ \,\mu\,x \,| \, x \in \C \}$ for any scalar $\mu >0$ and any set $\C \subset \R^n$.

\begin{definition}
\label{def:lambdaContractive}
Let $\lambda \in (0,1]$.
A set $\C \subseteq \X$ is called $\lambda$-contractive for~\eqref{eq:linSys} w.r.t.~\eqref{eq:constraintsXU}, if for every $x \in \C$ there exists $u \in \U$ such that $A\,x+B\,u \in \lambda\,\C$.
For the special case $\lambda=1$, a $1$-contractive set is also called controlled invariant.
For a given $\lambda$, the maximal $\lambda$-contractive set (for~\eqref{eq:linSys} w.r.t.~\eqref{eq:constraintsXU}), i.e., the union of all $\lambda$-contractive sets for~\eqref{eq:linSys} w.r.t.~\eqref{eq:constraintsXU}, is denoted by $\C_{\max}^\lambda$.
\end{definition}

\begin{definition}
\label{def:Cset}
A set $\C \subset \R^n$ is called C-set if it is convex and compact and
contains the origin as an interior point.
\end{definition}

For two given C-sets $\C,\D \subset\R^n$, we define the distance between the sets as in~\cite[Sect.~2]{Cwikel1986}. Specifically, let $\S:= \{ \xi \in \R^n \,|\, \|\xi\|_2 = 1 \}$ denote a hypersphere in $\R^n$ and let 
$
\rho(\xi,\C) := \sup \{ \, \mu>0 \, |\, \mu \,\xi \in \C\}
$
 for any $\xi \in \S$.
Then 
\begin{equation}
\label{eq:dCD}
d(\C,\D) := \sup_{\xi \in \S}  \,\left| \ln \left(\rho(\xi,\C) \right) - \ln\left(\rho(\xi,\D)\right) \right|
\end{equation}
provides a measure of the distance between $\C$ and $\D$.
In fact, it is straightforward to show that $d:\Y \times \Y  \rightarrow \R$ is a metric on the set $\Y$ of all C-sets in $\R^n$. In particular, we have $d(\C,\D)=d(\D,\C)\geq 0$ and $d(\C,\D) \leq d(\C,\E) +d(\D,\E)$ for all  $\C,\D,\E \in \Y$. 
Moreover, $d(\C,\D)=0$ if and only if $\C=\D$.
Now, according to the following lemma (which we prove in the appendix), evaluating the distance $d(\C,\D)$ allows one to check relations of the form~\eqref{eq:QkLambda1PlusEpsilonCmaxLambda}.

\begin{lemma}
\label{lem:dCDepsilon}
Let $\delta\geq0$ and let $\C,\D \!\subset\! \R^n$ be C-sets with $\C \subseteq\! \D$. Then
\begin{align}
\label{eq:dImplications}
\D &\subseteq \exp(\delta) \,\C  \!\!\!\!\! &\Longleftrightarrow& \qquad  d(\C,\D) \leq \delta \qquad \text{and} \\
\label{eq:dComputation}
d(\C,\D)&= \min_{\mu \in \R} \,\ln(\mu)  &\text{s.t.}\,\,& \qquad  \D \subseteq \mu \,\C .
\end{align}
\end{lemma}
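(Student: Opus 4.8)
The plan is to reduce everything to the radial function $\rho(\xi,\cdot)$ and to exploit the fact that a C-set is completely recovered from it. First I would record the basic properties. Since $\C$ and $\D$ are C-sets, compactness yields a finite upper bound and the interior-point property a positive lower bound, so there are constants $0 < r \le R$ with $r \le \rho(\xi,\C) \le R$ and $r \le \rho(\xi,\D) \le R$ for all $\xi \in \S$; thus $\rho(\xi,\cdot)$ is finite and bounded away from zero. Because $\C$ is compact the supremum defining $\rho(\xi,\C)$ is attained, so $\rho(\xi,\C)\,\xi \in \C$; together with star-shapedness of $\C$ about the origin (which follows from convexity and $0 \in \C$, since $t\,x = t\,x + (1-t)\,0 \in \C$ for $t \in [0,1]$) this gives the radial representation
\[
\C = \{\, s\,\xi \,|\, \xi \in \S,\ 0 \le s \le \rho(\xi,\C)\,\},
\]
and analogously for $\D$. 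I would also use the scaling identity $\rho(\xi,\mu\,\C) = \mu\,\rho(\xi,\C)$ for $\mu>0$, immediate from the definition, and the monotonicity $\mu_1\,\C \subseteq \mu_2\,\C$ for $0 < \mu_1 \le \mu_2$ (again from convexity and $0\in\C$).

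Next I would translate inclusions into pointwise inequalities of radial functions: for any $\mu>0$,
\[
\D \subseteq \mu\,\C \quad\Longleftrightarrow\quad \rho(\xi,\D) \le \mu\,\rho(\xi,\C)\ \text{ for all } \xi \in \S ,
\]
where the forward direction is immediate from the supremum definition of $\rho$ and the reverse follows by writing any nonzero $y\in\D$ as $y = \|y\|_2\,\xi$ and invoking the radial representation of $\mu\,\C$ together with the scaling identity. Applying this to the hypothesis $\C \subseteq \D$ (with $\mu=1$) gives $\rho(\xi,\C) \le \rho(\xi,\D)$ for every $\xi$, so the absolute value in~\eqref{eq:dCD} can be dropped and
\[
d(\C,\D) = \sup_{\xi\in\S}\bigl[\ln\rho(\xi,\D) - \ln\rho(\xi,\C)\bigr] = \sup_{\xi\in\S}\ln\frac{\rho(\xi,\D)}{\rho(\xi,\C)} .
\]

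To obtain~\eqref{eq:dComputation} I would identify the smallest feasible scaling. Set $\mu^\star := \sup_{\xi\in\S}\rho(\xi,\D)/\rho(\xi,\C)$; the bounds on $\rho$ give $1 \le \mu^\star < \infty$, and the inclusion characterization shows that $\D\subseteq\mu\,\C$ holds exactly for $\mu \ge \mu^\star$, so $\mu=\mu^\star$ is feasible and the minimum in~\eqref{eq:dComputation} is attained and equals $\mu^\star$. Since $\ln$ is continuous and increasing it commutes with the supremum, whence $\ln\mu^\star = \sup_\xi \ln(\rho(\xi,\D)/\rho(\xi,\C)) = d(\C,\D)$, which is~\eqref{eq:dComputation}. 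Relation~\eqref{eq:dImplications} then follows at once: $\D \subseteq \exp(\delta)\,\C$ says $\exp(\delta)$ is feasible, i.e.\ $\mu^\star \le \exp(\delta)$, i.e.\ $d(\C,\D) = \ln\mu^\star \le \delta$; conversely $d(\C,\D)\le\delta$ gives $\mu^\star \le \exp(\delta)$ and hence $\D \subseteq \mu^\star\,\C \subseteq \exp(\delta)\,\C$ by monotonicity of the scaling.

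The only genuinely delicate point is the equivalence between the set inclusion and the pointwise radial inequality, which is precisely where convexity (through star-shapedness about the origin) is essential: without it $\rho$ would not determine the set and the reverse implication could fail. The remaining ingredients — boundedness of $\rho$, attainment of the supremum defining $\mu^\star$, and the interchange of $\ln$ with the supremum — are routine for C-sets.
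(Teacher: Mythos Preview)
Your argument is correct. The organization differs from the paper's: you first isolate the equivalence $\D\subseteq\mu\,\C \iff \rho(\xi,\D)\le\mu\,\rho(\xi,\C)$ for all $\xi\in\S$, use it to identify the optimal scaling $\mu^\star=\sup_\xi \rho(\xi,\D)/\rho(\xi,\C)$, establish~\eqref{eq:dComputation} directly, and then read off~\eqref{eq:dImplications} as an immediate corollary. The paper proceeds in the opposite order, proving the two directions of~\eqref{eq:dImplications} separately (the reverse implication by contradiction, picking a point $x\in\D\setminus\exp(\delta)\,\C$ and showing it forces $d(\C,\D)>\delta$) and only then deducing~\eqref{eq:dComputation} from the already-established equivalence. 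Your route is slightly more explicit about why the minimum in~\eqref{eq:dComputation} is actually attained and about the role of convexity through the radial representation; the paper's route is shorter because it bypasses the full radial characterization and works pointwise. Substantively the two arguments rest on the same observation, namely that for C-sets inclusion is equivalent to a pointwise inequality of radial functions.
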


Finally, we introduce the shorthand notation $\Nat_{[i,k]}:=\{j \in \Nat \,|\, i \leq j\leq k\}$ and we denote the smallest and largest singular values of a matrix $\Phi \in \R^{n \times l}$ by $\sigma_{\min}(\Phi)$ and $\sigma_{\max}(\Phi)$, respectively. We further stress that most of the results presented in this paper require the two following assumptions on the system matrices $A\in \R^{n \times n}$ and $B \in \R^{n \times m}$ and the constraint sets $\X \subset \R^n$ and $\U \subset \R^m$ to hold.

\begin{assumption}
\label{assum:AB}
The pair $(A,B)$ is controllable.
\end{assumption}

\begin{assumption}
\label{assum:XU}
The sets $\X$ and $\U$ are C-sets.
\end{assumption}

\section{Proof of Contraction}

The following theorem provides the key to prove the statements about $\lambda$-contractive sets mentioned in the introduction and summarized in Thms.~\ref{thm:QkLambda1PlusEpsilonCmaxLambda} and~\ref{thm:muC1MaxSubsetCLambdaMax} further below.
Theorem~\ref{thm:dQnCDLeqEtaDCD} states that the mapping $\Q_n^\lambda(\C)$, i.e., the set $\Q_k^\lambda(\C)$ for $k$ equal to the state space dimension $n$, is a contraction on $(\Y,d)$ for every choice $\lambda \in (0,1]$. Note that Thm.~\ref{thm:dQnCDLeqEtaDCD} is similar to but different from \cite[Thm. 2.3]{Cwikel1986}. 
The most important difference is that \cite[Thm. 2.3]{Cwikel1986} only applies to the special case $\lambda =1$. In this paper, however, we are especially interested in cases where $\lambda \in (0,1)$.
Moreover, \cite[Thm. 2.3]{Cwikel1986} requires $A$ to be invertible (see, for instance, \cite[Eq. (2.4)]{Cwikel1986}, the computation of $\delta$ in the proof of \cite[Thm. 2.3]{Cwikel1986}, or \cite[Assum. 2.8]{Gutman1987}). This technical restriction is not necessary to show Thm.~\ref{thm:dQnCDLeqEtaDCD}.

\begin{theorem}
\label{thm:dQnCDLeqEtaDCD}
Let $\lambda \in (0,1]$ and let Assums.~\ref{assum:AB} and~\ref{assum:XU} be satisfied. Then, there exists an $\eta \in [0,1)$,  depending only on the system matrices $A$ and $B$, the constraints $\X$ and $\U$, and the contraction~$\lambda$,  such that
\begin{equation}
\label{eq:dQnCDLeqEtaDCD}
d(\Q_n^\lambda(\C),\Q_n^\lambda(\D)) \leq \eta \, d(\C,\D),
\end{equation}
for all C-sets $\C,\D \subset \R^n$ with $\C \subseteq \D$.
\end{theorem}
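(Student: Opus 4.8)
The plan is to reduce the claim to a single set inclusion and then verify that inclusion by an explicit steering construction. Since $\Q_1^\lambda$ is monotone (every trajectory admissible for $\C$ is admissible for $\D$ when $\C\subseteq\D$), we have $\Q_n^\lambda(\C)\subseteq\Q_n^\lambda(\D)$, and both sets are again C-sets. Writing $\delta:=d(\C,\D)$, Lemma~\ref{lem:dCDepsilon} gives $\D\subseteq\exp(\delta)\,\C$ and reduces \eqref{eq:dQnCDLeqEtaDCD} to
\[
\Q_n^\lambda(\D)\subseteq\exp(\eta\,\delta)\,\Q_n^\lambda(\C)
\]
for a suitable $\eta\in[0,1)$. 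The first step is to unroll the definition of $\Q_n^\lambda$: a point $x_0$ lies in $\Q_n^\lambda(\D)$ iff there are states $x_1,\dots,x_n$ and inputs $v_0,\dots,v_{n-1}$ with $x_{k+1}=A\,x_k+B\,v_k$, $x_k\in\lambda^k\X$ and $v_k\in\lambda^k\U$ for $k\in\Nat_{[0,n-1]}$, and $x_n\in\lambda^n\D$. In particular $x_n=A^n x_0+R_\lambda\,W$ with the $\lambda$-weighted controllability matrix $R_\lambda:=[\,A^{n-1}B\;\;\lambda A^{n-2}B\;\cdots\;\lambda^{n-1}B\,]$ and a rescaled stacked input $W$ (with $w_k:=v_k/\lambda^k\in\U$); since $\lambda>0$, Assumption~\ref{assum:AB} yields $\sigma_{\min}(R_\lambda)>0$ with no invertibility of $A$ required.

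The second step is the construction. Fix $x_0\in\Q_n^\lambda(\D)$ with such a trajectory and set $\beta:=\exp(-\eta\,\delta)\in(0,1]$. I would start the new trajectory at $\beta x_0$ with base inputs $\beta v_k$, producing states $\beta x_k$, and then add a correction $\Delta W$ (in the weighted coordinates) steering the terminal state to the target $\exp(-\delta)x_n$, which lies in $\lambda^n\C$ because $x_n\in\lambda^n\D\subseteq\lambda^n\exp(\delta)\,\C$. The required correction solves $R_\lambda\,\Delta W=(\exp(-\delta)-\beta)\,x_n$, and its minimum-norm solution obeys $\|\Delta W\|_2\le(\beta-\exp(-\delta))\,\|x_n\|_2/\sigma_{\min}(R_\lambda)$. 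The crucial observation making this uniform is that, although $\C,\D$ may be arbitrarily large, every admissible trajectory is confined by the constraints: $x_0\in\X$ and $v_k\in\U$ force $\|x_n\|_2\le M$ for a constant $M=M(A,B,\X,\U,\lambda,n)$ independent of $\C,\D$.

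The third step is to certify that the corrected trajectory is admissible. Writing the correction in the weighted coordinates keeps each $\Delta v_k=\lambda^k\Delta w_k$ proportional to $\lambda^k$, and each state perturbation $\varepsilon_k=\sum_{j<k}A^{k-1-j}B\,\Delta v_j$ bounded by $\lambda^k$ times a constant times $\|\Delta W\|_2$. Using the inradii $r_\X,r_\U>0$ of the C-sets $\X,\U$ and convexity (a point $\beta y+z$ with $y\in\X$ lies in $\X$ whenever $\|z\|_2\le(1-\beta)r_\X$), both $v_k'\in\lambda^k\U$ and $x_k'\in\lambda^k\X$ are guaranteed once
\[
\exp(-\eta\,\delta)-\exp(-\delta)\ \le\ \kappa\,\bigl(1-\exp(-\eta\,\delta)\bigr),
\]
where $\kappa>0$ collects $\min(r_\X,r_\U)$, $\sigma_{\min}(R_\lambda)$, $M$, and the trajectory constants. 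I would then set $\eta:=1/(1+\kappa)\in[0,1)$ and verify this scalar inequality for every $\delta\ge0$: substituting $a:=\exp(-\eta\delta)\in(0,1]$ turns it into $f(a):=\kappa-(1+\kappa)a+a^{1/\eta}\ge0$, and since $1/\eta>1$ the function $f$ is convex with $f(1)=0$ and $f'(1)=1/\eta-(1+\kappa)\le0$, so $f$ is nonincreasing on $(0,1]$ and hence nonnegative there.

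The main obstacle is the third step: the geometric correction must be absorbed by the constraint slack uniformly in $\delta$ and independently of $\C,\D$. This hinges on two points I would treat carefully — that admissibility reduces exactly to the displayed scalar inequality, for which the full row rank of $R_\lambda$ for every $\lambda\in(0,1]$ and the boundedness of all constraint-confined trajectories are essential, and that the choice $\eta=1/(1+\kappa)<1$ makes the inequality hold for all $\delta\ge0$ through the convexity argument, not merely in the small-$\delta$ regime where a first-order comparison already forces $\eta\ge1/(1+\kappa)$.
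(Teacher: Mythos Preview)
Your argument is correct and constitutes a genuine alternative to the paper's proof. The paper works ray-by-ray: for each direction $\xi$ it picks the extreme point $\varrho\,\xi\in\Q_n^\lambda(\mu\C)$ (with $\mu=\exp(\delta)$), precomputes a \emph{fixed} small point $\hat\varrho\,\xi$ that is steered to the origin in $n$ admissible steps, and takes the convex combination $\varrho^\ast=\tfrac{1}{\mu}\varrho+\tfrac{\mu-1}{\mu}\hat\varrho$ to land in $\Q_n^\lambda(\C)$; the scalar step is the elementary bound $\mu^\gamma\le 1+(\mu-1)\gamma$, giving the explicit $\eta=1-\hat\varrho/\ub r_x$. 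You instead scale the whole admissible trajectory by $\beta=\exp(-\eta\delta)$ and use the pseudoinverse of $R_\lambda$ to correct only the terminal state to $\exp(-\delta)x_n\in\lambda^n\C$, absorbing the perturbation in the $(1-\beta)$-slack of $\X$ and $\U$; your scalar step is the convexity/monotonicity of $a\mapsto\kappa-(1+\kappa)a+a^{1+\kappa}$, yielding $\eta=1/(1+\kappa)$. Both hinge on the same two uniform ingredients (full row rank of the $\lambda$-weighted controllability matrix, and the a priori bound $\|x_n\|\le M$ coming from $x_0\in\X$, $v_k\in\lambda^k\U$), and neither needs $A$ invertible. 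The paper's route delivers a cleaner closed-form $\eta$ that feeds directly into the later Corollary~\ref{cor:choiceOfEta} and Lemma~\ref{lem:appropriateKCD}; your route is arguably more transparent control-theoretically (scale, then steer the residual) and sidesteps the ray parametrisation entirely. One small wording issue: the bound ``$\|\varepsilon_k\|\le \lambda^k\cdot\text{const}\cdot\|\Delta W\|$'' holds only because the constant is allowed to absorb a factor $\lambda^{-(n-1)}$ over the finite horizon $k\le n-1$; this is harmless for the result but worth stating explicitly.
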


The proof of Thm.~\ref{thm:dQnCDLeqEtaDCD} requires some preparation.
First note that $\rho(\xi,\cdot )$ and $d(\cdot ,\cdot )$ are well-defined only for C-sets. Thus, the following observation is elementary to analyze~\eqref{eq:dQnCDLeqEtaDCD}.

\begin{lemma}
\label{lem:QkLambdaCSet}
Let $\lambda \in (0,1]$, let $\C \subset \R^n$ be a C-set, and let Assum.~\ref{assum:XU} be satisfied.
 Then $\Q_k^\lambda(\C)$ is a C-set for every $k\in \Nat$.
\end{lemma}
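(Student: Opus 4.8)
The plan is to argue by induction on $k$. The base case $\Q_0^\lambda(\C)=\C$ is a C-set by hypothesis, so by the recursion~\eqref{eq:QkLambdaSequence} it suffices to prove the single inductive step: if $\D$ is a C-set, then so is $\Q_1^\lambda(\D)$. Since $\lambda\in(0,1]$, I would first record that $\lambda\D$ is again a C-set, because scaling by a positive factor preserves convexity, compactness, and the containment of the origin in the interior. The key structural observation is then that $\Q_1^\lambda(\D)$ is the image under the linear projection $(x,u)\mapsto x$ of the set
\[
P := \{(x,u)\in\X\times\U \,|\, Ax+Bu\in\lambda\,\D\}\subseteq\R^{n+m},
\]
and I would verify the three defining properties of a C-set through this representation.

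For convexity and compactness I would note that $P$ is the intersection of the product $\X\times\U$ with the preimage of $\lambda\D$ under the continuous linear map $(x,u)\mapsto Ax+Bu$. By Assum.~\ref{assum:XU} and the inductive hypothesis, $\X\times\U$ is convex and compact, while the preimage of the convex closed set $\lambda\D$ is convex and closed; hence $P$ is convex and compact. A linear projection maps convex sets to convex sets and---because $P$ is bounded---compact sets to compact sets, so $\Q_1^\lambda(\D)$ inherits both properties.

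For the interior condition I would use that $0\in\interior(\X)$ and $0\in\interior(\lambda\D)$, which supply radii $r_\X,r_\D>0$ with the corresponding balls contained in $\X$ and $\lambda\D$. Choosing $u=0\in\U$ reduces the membership test to $Ax\in\lambda\D$, and since $\|Ax\|_2\le\sigma_{\max}(A)\,\|x\|_2$, every $x$ with $\|x\|_2<\min\{r_\X,\,r_\D/\sigma_{\max}(A)\}$ (the second term read as $+\infty$ when $A=0$) lies in $\Q_1^\lambda(\D)$. This exhibits a ball about the origin inside $\Q_1^\lambda(\D)$, giving $0\in\interior(\Q_1^\lambda(\D))$.

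The only point requiring care---the main obstacle, such as it is---is the compactness of the projection: projections of unbounded closed convex sets can fail to be closed, so it is essential to obtain closedness of $\Q_1^\lambda(\D)$ from the boundedness of $P$ inherited from $\X\times\U$, rather than from closedness alone. Everything else is a routine application of elementary convexity and continuity facts. I would also remark that this lemma needs neither controllability (Assum.~\ref{assum:AB}) nor invertibility of $A$, since the interior argument uses only the admissible choice $u=0$.
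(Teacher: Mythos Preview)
Your argument is correct: the inductive reduction to the one-step map, the projection representation of $\Q_1^\lambda(\D)$, the convexity/compactness verification via intersection with a compact box and linear preimage, and the interior argument via $u=0$ are all sound. The paper itself does not give a proof of this lemma at all---it only remarks that the result is well-known and points to \cite[Props.~3.1 and 3.2]{Blanchini1994} and \cite[Rem.~2.2]{Cwikel1986}---so your self-contained elementary argument goes beyond what the paper supplies; the cited references use essentially the same projection-of-a-compact-convex-set reasoning you give.
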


The proof of Lem.~\ref{lem:QkLambdaCSet} is well-known. See for example \cite[Props. 3.1 and 3.2]{Blanchini1994} and \cite[Rem. 2.2]{Cwikel1986}.
Moreover, it is trivial to show (by induction) that the following relation holds.

\begin{lemma}
\label{lem:QkLambdaCD}
Let $\lambda \in (0,1]$,  let $\C,\D \subset \R^n$ be C-sets with $\C \subseteq \D$,  and let Assum.~\ref{assum:XU} be satisfied. Then 
$
\Q_k^\lambda(\C) \subseteq \Q_k^\lambda(\D)
$
for every $k\in \Nat$.
\end{lemma}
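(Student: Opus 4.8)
The plan is to argue by induction on $k$, with the entire argument resting on the monotonicity of the one-step operator $\Q_1^\lambda$. The base case $k=0$ is immediate, since $\Q_0^\lambda(\C)=\C$ and $\Q_0^\lambda(\D)=\D$, so the hypothesis $\C\subseteq\D$ gives the claim directly.

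The core observation I would isolate first is that $\Q_1^\lambda$ preserves inclusions: whenever $\C',\D'\subset\R^n$ are C-sets with $\C'\subseteq\D'$, one has $\Q_1^\lambda(\C')\subseteq\Q_1^\lambda(\D')$. To see this, fix $x\in\Q_1^\lambda(\C')$. By the definition~\eqref{eq:Q1Lambda}, $x\in\X$ and there is some $u\in\U$ with $Ax+Bu\in\lambda\,\C'$. Since $\lambda>0$, scaling by $\lambda$ respects the inclusion $\C'\subseteq\D'$, so $\lambda\,\C'\subseteq\lambda\,\D'$ and therefore $Ax+Bu\in\lambda\,\D'$. The same $x$ and $u$ thus certify $x\in\Q_1^\lambda(\D')$, which establishes the monotonicity.

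For the inductive step I would assume $\Q_k^\lambda(\C)\subseteq\Q_k^\lambda(\D)$ and apply the monotonicity just established with $\C'=\Q_k^\lambda(\C)$ and $\D'=\Q_k^\lambda(\D)$; by Lem.~\ref{lem:QkLambdaCSet} both of these are C-sets, so $\Q_1^\lambda$ is meaningfully applicable to them. The recursion~\eqref{eq:QkLambdaSequence} then yields
\begin{equation*}
\Q_{k+1}^\lambda(\C)=\Q_1^\lambda\big(\Q_k^\lambda(\C)\big)\subseteq\Q_1^\lambda\big(\Q_k^\lambda(\D)\big)=\Q_{k+1}^\lambda(\D),
\end{equation*}
which closes the induction. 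There is no genuine obstacle here: the only points requiring any care are that scaling by a positive $\lambda$ preserves set inclusion and that each intermediate set remains a C-set (so that the operator is well defined at every stage), and both are handled by the definition~\eqref{eq:Q1Lambda} and by Lem.~\ref{lem:QkLambdaCSet}, respectively.
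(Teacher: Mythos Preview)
Your proof is correct and follows exactly the approach the paper indicates: the paper simply remarks that the relation is ``trivial to show (by induction)'' without spelling out the argument, and your induction via monotonicity of $\Q_1^\lambda$ is precisely that trivial proof.
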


Finally, it will be useful to state (necessary and sufficient) conditions for a state $x \in \R^n$ to be contained in the set $\Q_{k}^\lambda(\C)$ (resp.~$\Q_{k+1}^\lambda(\C)$). This is done in the following lemma.

\begin{lemma}
\label{lem:conditionsXinQkLambda}
Let $\lambda \in (0,1]$,  $k \in \Nat$, 
let $\C\subset\R^n$ be a C-set, and let Assum.~\ref{assum:XU} be satisfied. 
Then $x\in \Q_{k+1}^\lambda(\C)$ if and only if there exist
 $u_0,\dots,u_{k} \in \U$ and $\gamma \in \C$ such that
 \begin{align}
\label{eq:xInQkLambdaCondX}
&A^{j} x + \sum_{i=0}^{j-1}  A^{j-1-i} B \,\lambda^i  u_i \in \lambda^j\,\X \quad \forall \, j \in \Nat_{[0,k]} \quad \text{and} \\ 
\label{eq:xInQkLambdaCondGamma}
&A^{k+1} x + \sum_{i=0}^{k}  A^{k-i} B \, \lambda^i   u_i = \lambda^{k+1} \gamma.
\end{align}
\end{lemma}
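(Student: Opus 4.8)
The plan is to prove the equivalence by induction on $k$, exploiting the recursive definition $\Q_{k+1}^\lambda(\C)=\Q_1^\lambda(\Q_k^\lambda(\C))$ to ``peel off'' a single step at a time. Throughout I would invoke Lem.~\ref{lem:QkLambdaCSet} to guarantee that every intermediate set $\Q_j^\lambda(\C)$ is again a C-set, so that all expressions are well posed, and I would rely on the convention that the empty sum (arising for $j=0$) vanishes.

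For the base case $k=0$, I would unwind the definitions directly. The index set $\Nat_{[0,0]}=\{0\}$ reduces~\eqref{eq:xInQkLambdaCondX} to the single membership $x\in\X$, while~\eqref{eq:xInQkLambdaCondGamma} reads $Ax+Bu_0=\lambda\,\gamma$ with $\gamma\in\C$. Together these state exactly that $x\in\X$ and that there exists $u_0\in\U$ with $Ax+Bu_0\in\lambda\,\C$, which is the definition of $x\in\Q_1^\lambda(\C)$.

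For the inductive step I would assume the characterization for parameter $k$ and consider $x\in\Q_{k+2}^\lambda(\C)=\Q_1^\lambda(\Q_{k+1}^\lambda(\C))$. By the definition of $\Q_1^\lambda$ this is equivalent to $x\in\X$ together with the existence of $u_0\in\U$ and a point $y\in\Q_{k+1}^\lambda(\C)$ satisfying $Ax+Bu_0=\lambda\,y$. Applying the inductive hypothesis to $y$ yields inputs $v_0,\dots,v_k\in\U$ and $\gamma\in\C$ fulfilling the level-$k$ conditions for $y$. The heart of the argument is then to relabel $u_{i+1}:=v_i$ for $i\in\Nat_{[0,k]}$ and to substitute $\lambda\,y=Ax+Bu_0$ into the left-hand sides of those conditions: factoring $A^{j-1}(Ax+Bu_0)=A^{j-1}\lambda\,y$ and pulling out one power of $\lambda$ turns, for each index, the $j$-step expression in $y$ into the $(j+1)$-step expression in $x$, with $\lambda^{j}\X$ becoming $\lambda^{j+1}\X$. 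This reindexing reproduces~\eqref{eq:xInQkLambdaCondX} for $j\in\Nat_{[1,k+1]}$ and~\eqref{eq:xInQkLambdaCondGamma} at level $k+1$, while the remaining $j=0$ membership is supplied directly by $x\in\X$.

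The step I expect to demand the most care is precisely this algebraic reindexing: keeping the powers of $A$, the factors $\lambda^i$, and the shifted input labels mutually consistent, and checking that every implication is in fact an equivalence so that the single induction settles both directions of the ``if and only if.'' Reversibility becomes transparent once the correspondence $(u_0;u_1,\dots,u_{k+1})\leftrightarrow(u_0;v_0,\dots,v_k)$ together with $y=(Ax+Bu_0)/\lambda$ is fixed, since each substitution above can be read from right to left; the only genuine side conditions are $u_0\in\U$ and $x\in\X$, both of which appear explicitly among the level-$(k+1)$ conditions.
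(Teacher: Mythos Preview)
Your proposal is correct and follows essentially the same approach as the paper: both arguments exploit the recursion $\Q_{k+1}^\lambda(\C)=\Q_1^\lambda(\Q_k^\lambda(\C))$ and the one-step characterization of $\Q_1^\lambda$ to build up the conditions, with the only cosmetic difference that you package the unrolling as a formal induction on $k$ while the paper writes out the intermediate points $q_k,q_{k-1},\dots,q_0$ explicitly and collects the resulting constraints in one pass.
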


\begin{proof}
Consider any C-set $\T \subset \R^n$ and note that $x \in \Q_{1}^\lambda(\T)$ if and only if (i) $x \in \X$ and (ii) there exist $u \in \U$ and $\tau \in \T$ such that
$ \lambda \, \tau = A\,x + B \, u$. Consequently, $x\in \Q_{k+1}^\lambda(\C)= \Q_1^\lambda(\Q_k^\lambda(\C))$ if and only if (i) $x \in \X$ and (ii) there exist $u_0 \in \U$ and $q_k \in \Q_k^\lambda(\C)$ such that
$ \lambda \, q_k = A\,x + B \, u_0$.
By the same reasoning, $q_k \in \Q_k^\lambda(\C)$ if and only if (i) $q_k \in \X$ and (ii) there exist $u_1 \in \U$ and $q_{k-1} \in \Q_{k-1}^\lambda(\C)$ such that
$ \lambda \, q_{k-1} = A\,q_{k} + B \, u_1$. Finally, $q_{1} \in \Q_1^\lambda(\C)$ if and only if (i) $q_1 \in \X$ and (ii) there exist $u_k \in \U$ and $q_{0} \in \Q_{0}^\lambda(\C)=\C$ such that
$ \lambda \, q_{0} = A\,q_1 + B \, u_k$.
The conditions \eqref{eq:xInQkLambdaCondX} and~\eqref{eq:xInQkLambdaCondGamma} are obtained by collecting the 
the conditions (i) and (ii), and defining $\gamma:=q_0$. 
\end{proof}

Lemmas~\ref{lem:QkLambdaCSet}, \ref{lem:QkLambdaCD}, and \ref{lem:conditionsXinQkLambda} allow to prove Thm.~\ref{thm:dQnCDLeqEtaDCD}.

\begin{proof}[Proof of Thm.~\ref{thm:dQnCDLeqEtaDCD}]
The proof consists of three parts. In part (i), we  derive a useful upper bound for $d(\Q^\lambda_n(\C),\Q^\lambda_n(\D))$. In part (ii), we identify some special states $x^\ast$ that are guaranteed to be contained in $\Q^\lambda_n(\C)$. These states are finally used to construct an $\eta \in [0,1)$ satisfying~\eqref{eq:dQnCDLeqEtaDCD} in part (iii) of the proof.

Part (i). Consider any C-sets $\C,\D \subset \R^n$ with $\C\subseteq \D$, let $\delta:=d(\C,\D)$, and define $\mu:=\exp(\delta)$. Note that $\delta \geq 0$ and $\mu \geq 1$. Moreover,  we have $\D \subseteq \mu \,\C$ according to Lem.~\ref{lem:dCDepsilon}.
Based on this,  we obtain $\Q^\lambda_n(\C) \subseteq \Q^\lambda_n(\D) \subseteq \Q^\lambda_n(\mu\,\C)$ 
according to Lem.~\ref{lem:QkLambdaCD} and
\begin{equation}
\nonumber
\rho(\xi,\Q^\lambda_n(\C)) \leq \rho(\xi,\Q^\lambda_n(\D)) \leq \rho(\xi,\Q^\lambda_n(\mu\,\C)) 
\end{equation}
for every $\xi \in \S$ by definition of $\rho(\cdot,\cdot)$. Thus, a first upper bound for the distance $d(\Q^\lambda_n(\C),\Q^\lambda_n(\D))$ reads
\begin{equation}
\label{eq:firstOverestimationDQnCQnD}
d(\Q^\lambda_n(\C),\Q^\lambda_n(\D))  \leq \sup_{\xi \in \S} \, \ln \left(\frac{\rho(\xi,\Q^\lambda_n(\mu\,\C))}{\rho(\xi,\Q^\lambda_n(\C))} \right).
\end{equation}

Part (ii). In the following, let $\xi \in \S$ be arbitrary but fixed. Define
 $\varrho := \rho(\xi,\Q^\lambda_n(\mu\,\C))$ and observe that $\varrho>0$. Let $x := \varrho\,\xi$ and note 
$x  \in \Q^\lambda_n(\mu\,\C)$. Thus, according 
to Lem.~\ref{lem:conditionsXinQkLambda}, there exist $u_0,\dots,u_{n-1} \in \U$ and $\gamma \in \mu\,\C$ such that Eqs.~\eqref{eq:xInQkLambdaCondX} and~\eqref{eq:xInQkLambdaCondGamma} hold for $k=n$.
For the case addressed here, conditions~\eqref{eq:xInQkLambdaCondX} and~\eqref{eq:xInQkLambdaCondGamma}
can be rewritten as follows.
There exist $u_0,\dots,u_{n-1} \in \U$ and $\gamma^\ast \in \C$ such that
\begin{align}
\label{eq:rhoXHatJ}
&A^{j} \varrho\, \xi + \sum_{i=0}^{j-1}  A^{j-1-i} B \, \lambda^i u_i \in \lambda^j\,\X \quad \forall \, j \in \Nat_{[0,n-1]} \quad  \text{and}  \\
\label{eq:rhoXHatN} 
& A^{n} \varrho\, \xi  + \sum_{i=0}^{n-1}  A^{n-1-i} B \, \lambda^i  u_i = \mu\,\lambda^{n} \gamma^\ast.
\end{align}
We now prove there also exist $v_0,\dots,v_{n-1} \in \U$ and $\hat{\varrho}>0$ such~that
\begin{align}
\label{eq:rhoHatXHatJ}
& A^{j} \hat{\varrho}\,\xi  + \sum_{i=0}^{j-1}  A^{j-1-i} B \,\lambda^i v_i \in \lambda^j\,\X \quad \forall\,\, j \in \Nat_{[0,n-1]} \quad  \text{and} \\ 
\label{eq:rhoHatXHatN}
& A^{n} \hat{\varrho}\, \xi + \sum_{i=0}^{n-1}  A^{n-1-i} B \, \lambda^i  v_i = 0.
\end{align} 
As a preparation, let $\B^n(r)$ denote a ball in $\R^n$ of radius $r$  centered at the origin.
Then, since $\X$ and $\U$ are C-sets by Assum.~\ref{assum:XU}, there  exist $\ub{r}_x \geq \lb{r}_x>0 $
and $\lb{r}_u >0$  such that 
\begin{equation}
\label{eq:rUrX}
\B^n(\lb{r}_x) \subseteq \X \subseteq \B^n(\ub{r}_x) 
 \qquad \text{and} \qquad \B^m(\lb{r}_u) \subseteq \U,
\end{equation}
which obviously implies $\sup_{\xi \in \S} \rho(\xi,\X) \leq \ub{r}_x$.
In addition, let 
\begin{equation}
\label{eq:alpha}
\alpha := \max \left\{ 1, \max_{j\in \Nat_{[1,n]}} \| A^j \|_2 \right\},
\end{equation}
and define
\begin{equation}
\label{eq:matricesPhi}
\Phi_j:= ( A^{j-1} B , \dots, A^0 B) \in \R^{n \times j m} 
\end{equation}
and 
$\vartheta_j := (\lambda^0 v_0, \dots, \lambda^{j-1} v_{j-1})^T \in \R^{j m}$
for every $j \in  \Nat_{[1,n]}$. Note that $\alpha \geq 1$  and $\sigma_{\min} (\Phi_n)>0$ by construction and since $\Phi_n$ has full rank 
as a consequence of Assum.~\ref{assum:AB}. Consequently, the choice
\begin{equation}
\label{eq:choiceOfRho}
\hat{\varrho}= \frac{\lambda^{n-1}}{\alpha} \min \left\{ \frac{\lb{r}_x}{1 + \frac{\sigma_{\max} (\Phi_n)}{\sigma_{\min} ( \Phi_n)}},\, \lb{r_u} \,\sigma_{\min} ( \Phi_n) \right\}
\end{equation}  
implies $\hat{\varrho} > 0$.
Moreover, we have
\begin{equation}
\label{eq:rewriteWithMatrices}
A^{j}  \hat{\varrho}\, \xi  + \sum_{i=0}^{j-1} \lambda^i A^{j-1-i} B \, v_i  =  A^{j} \hat{\varrho}\, \xi + \Phi_j \, \vartheta_j
\end{equation}
for every $j \in  \Nat_{[1,n]}$ by construction.
We next show that, for the state $\hat{x}:=\hat{\varrho} \,\xi$, we can always compute $n$ inputs $v_0,\dots,v_{n-1} \in \U$ such that~\eqref{eq:rhoHatXHatN} holds. In fact,
since the 
 Moore-Penrose pseudoinverse $\Phi_n^{+} := \Phi_n^T (\Phi_n\,\Phi_n^T)^{-1}$ has full rank (again due to Assum.~\ref{assum:AB}),
$\vartheta_n =  - \Phi_n^{+} A^{n} \hat{\varrho} \,\xi$ results a suitable choice.
Clearly, the associated inputs $v_0,\dots,v_{n-1}$ satisfy~\eqref{eq:rhoHatXHatN}. Moreover, we have
\begin{equation}
\label{eq:thetaNBound}
\left\|\vartheta_n\right\|_2  \leq \hat{\varrho}\, \|\Phi_n^{+}\|_2 \,\|A^{n}\|_2 \,\|\xi\|_2 \leq \frac{\alpha\,\hat{\varrho}}{\sigma_{\min}(\Phi_n)} 
\end{equation}
and consequently
$$
\left\| (v_0,\dots,v_{n-1})^T \right\|_2 \leq  \frac{1}{\lambda^{n-1}} \left\|\vartheta_n\right\|_2   \leq \frac{\hat{\varrho}\,\alpha}{\lambda^{n-1}\,\sigma_{\min}(\Phi_n)}  \leq \lb{r}_u,
$$
where the last relation holds because of~\eqref{eq:choiceOfRho}. Thus, we obtain $v_i \in\B^m(\lb{r}_u) \subseteq  \U$ for every $i \in \Nat_{[0,n-1]}$ according to~\eqref{eq:rUrX}.
To show~\eqref{eq:rhoHatXHatJ}, first note that, for every $j \in \Nat_{[1,n-1]}$, we obtain 
 \begin{align}
\nonumber
\left\|\,
 A^{j} \hat{\varrho}\,\xi  + \Phi_j \, \vartheta_j
 \right\|_2 
 &\leq \alpha\,\hat{\varrho} + \|\Phi_j\|_2   \left\|\vartheta_j \right\|_2 \leq \alpha\,\hat{\varrho} + \|\Phi_n\|_2   \left\|\vartheta_n \right\|_2 \\
 \nonumber
 & \leq \alpha\,\hat{\varrho}\,\left( 1 + \frac{\sigma_{\max} (\Phi_n)}{\sigma_{\min} ( \Phi_n)} \right) \leq \lambda^{n-1} \, \lb{r}_x
\end{align}
according to Eqs.~\eqref{eq:choiceOfRho}--\eqref{eq:thetaNBound} and due to $\|\Phi_n\|_2=\sigma_{\max}(\Phi_n)$.
Thus, \eqref{eq:rhoHatXHatJ} holds for every $j  \in \Nat_{[1,n-1]}$ since
$$\B^n(\lambda^{n-1} \lb{r}_x) = \lambda^{n-1} \,\B^n(\lb{r}_x) \subseteq\lambda^{n-1} \X \subseteq \lambda^{j} \X$$ 
follows from~\eqref{eq:rUrX}.
Clearly, \eqref{eq:rhoHatXHatJ} also holds for $j=0$ since $\hat{\varrho}\, \xi \in \B^n(\lambda^{n-1} \lb{r}_x) \subseteq \lambda^0 \X$ is ensured by~\eqref{eq:choiceOfRho}.
Now, combining the results from~\eqref{eq:rhoXHatJ}--\eqref{eq:rhoXHatN} and~\eqref{eq:rhoHatXHatJ}--\eqref{eq:rhoHatXHatN}, we can show that there exist
$w_0,\dots,w_{n-1} \in \U$ and $\varrho^\ast>0$ such that
\begin{align}
\label{eq:rhoAstXHatJ}
&A^{j} \varrho^\ast  \xi  + \sum_{i=0}^{j-1}  A^{j-1-i} B \,\lambda^i w_i \in \lambda^j\,\X \quad \forall\,\, j \in \Nat_{[0,n-1]}\quad  \text{and}\\ 
\label{eq:rhoAstXHatN}
& A^{n} \varrho^\ast  \xi  + \sum_{i=0}^{n-1}  A^{n-1-i} B \, \lambda^i w_i = \lambda^{n}\, \gamma^\ast.
\end{align}
In fact, the choices
\begin{equation}
\label{eq:rhoAst}
w_i := \frac{1}{\mu}\,u_i + \frac{\mu-1}{\mu}\,v_i \quad \text{and} \quad \varrho^\ast:= \frac{1}{\mu}\,\varrho + \frac{\mu-1}{\mu}\,\hat{\varrho},
\vspace{-1mm}
\end{equation}
which satisfy $w_i \in \U$ and $\varrho^\ast>0$, allow to rewrite~\eqref{eq:rhoAstXHatJ} as 
$$
A^{j} \varrho \,\xi + \!\sum_{i=0}^{j-1}  A^{j-1-i} B \,\lambda^i u_i  + (\mu-1)\Big( A^{j} \hat{\varrho}\,\xi+ \sum_{i=0}^{j-1}  A^{j-1-i} B \, \lambda^iv_i \Big) 
 \in \mu\, \lambda^j\,\X, 
$$
which holds according to~\eqref{eq:rhoXHatJ} and~\eqref{eq:rhoHatXHatJ}.
Analogously,~\eqref{eq:rhoAstXHatN} can be proven using Eqs.~\eqref{eq:rhoXHatN}, \eqref{eq:rhoHatXHatN}, and~\eqref{eq:rhoAst}. Thus, due to~\eqref{eq:rhoAstXHatJ}--\eqref{eq:rhoAstXHatN} and $\gamma^\ast\in \C$, we find $x^\ast:=\varrho^\ast \xi \in \Q^\lambda_n(\C)$ according to Lem.~\ref{lem:conditionsXinQkLambda}.

Part (iii). Clearly, $\varrho^\ast \xi \in \Q^\lambda_n(\C)$ implies $\varrho^\ast \leq \rho(\xi,\Q^\lambda_n(\C))$. Since $\xi \in \S$ was arbitrary, we obtain 
\begin{equation}
\label{eq:underestimationRhoQnC}
 \frac{\rho(\xi,\Q^\lambda_n(\mu\,\C)) + (\mu-1)\,\hat{\varrho}}{\mu}  \leq \rho(\xi,\Q^\lambda_n(\C))
 \end{equation}
for every $\xi \in \S$ according to~\eqref{eq:rhoAst} and by definition of $\varrho$. In addition, since $\Q^\lambda_n(\mu\,\C) \subseteq \X$ and due to~\eqref{eq:rUrX}, we have
\begin{equation}
\label{eq:overestimationSupRho}
\sup_{ \xi \in \S} \rho(\xi,\Q^\lambda_n(\mu\,\C)) \leq \sup_{\xi \in \S} \rho(\xi,\X) \leq \ub{r}_x.
\vspace{-1mm}
\end{equation}
We next define $\gamma\!:=\!\frac{\hat{\varrho}}{\ub{r}_x}$ and note that $\gamma \!\in \!(0,0.5]$ since $\hat{\varrho} \leq \frac{\lb{r}_x}{2}$ according to~\eqref{eq:choiceOfRho} (due to $\alpha \geq 1$,  $\sigma_{\max} (\Phi_n) \geq \sigma_{\min} ( \Phi_n)>0$, and  $\lambda \in (0,1]$).
Now, combining~\eqref{eq:underestimationRhoQnC} and \eqref{eq:overestimationSupRho}, we  infer 
\vspace{-1mm}
\begin{align}
\nonumber
\sup_{\xi \in \S} \,\ln \left(\frac{\rho(\xi,\Q^\lambda_n(\mu\,\C))}{\rho(\xi,\Q^\lambda_n(\C))} \right) &\leq  \,\sup_{\xi \in \S} \,\ln \left( \frac{\mu}{1 + \frac{(\mu-1)\,\hat{\varrho}}{\rho(\xi,\Q^\lambda_n(\mu\,\C))}} \right) \leq  \ln \left( \frac{\mu}{1 + \frac{(\mu-1)\,\hat{\varrho}}{\ub{r}_x}} \right) \\
\nonumber
& = \ln(\mu) -  \ln \left(1 + (\mu-1)\,\gamma \right)\leq \ln(\mu) - \ln(\mu^\gamma) \\
\label{eq:finalObservation} 
&= (1-\gamma) \, \ln(\mu), 
\end{align}
where the last inequality holds since $\mu^\gamma$ underestimates $1 + (\mu-1) \,\gamma$  (given that $\mu\geq1$ and $\gamma \in (0,0.5]$).
Finally, taking $\ln(\mu)=\delta=d(\C,\D)$ and~\eqref{eq:firstOverestimationDQnCQnD}  into account, it is easy to see that~\eqref{eq:finalObservation} 
 proves~\eqref{eq:dQnCDLeqEtaDCD} for the choice  $\eta: = 1 - \gamma=1-\frac{\hat{\varrho}}{\ub{r}_x}$.
In fact, with regard to relations~\eqref{eq:rUrX}--\eqref{eq:choiceOfRho}, this choice of $\eta$ only depends on the system matrices $A$ and $B$, the constraints $\X$ and $\U$, and the contraction~$\lambda$.  Moreover, we have $\eta \in [0.5,1) \subset [0,1)$ due to $\gamma \in (0,0.5]$.
 \end{proof}

Theorem~\ref{thm:dQnCDLeqEtaDCD} establishes the contraction of $\Q_n^\lambda(\C)$. Using similar arguments (but omitting the variations~\eqref{eq:rhoHatXHatJ} and~\eqref{eq:rhoHatXHatN} which require controllability), it is easy to prove the following weaker relation, which however holds for every $k \in \Nat$.

\begin{corollary}
\label{cor:dQkLambdaLeqD}
Let $\lambda \in (0,1]$ and let Assum.~\ref{assum:XU}  be satisfied. Then 
\begin{equation}
\label{eq:weakerRelation}
d(\Q_k^\lambda(\C),\Q_k^\lambda(\D)) \leq  d(\C,\D)
\end{equation}
 for every $k \in \Nat$ and all C-sets $\C,\D \subset \R^n$ with $\C \subseteq \D$.
\end{corollary}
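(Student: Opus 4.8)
The plan is to mirror the structure of the proof of Theorem~\ref{thm:dQnCDLeqEtaDCD}, but to strip out the controllability-dependent machinery. The Corollary differs from the Theorem in two essential respects: the contraction factor is merely $1$ rather than some $\eta \in [0,1)$, and the statement must hold for \emph{every} $k \in \Nat$, not just $k=n$. Both relaxations actually simplify matters, so I would aim to isolate the part of the Theorem's proof that does not rely on Assumption~\ref{assum:AB}.

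First I would reproduce Part~(i) verbatim. Fix C-sets $\C \subseteq \D$, set $\delta := d(\C,\D)$ and $\mu := \exp(\delta) \geq 1$. Lemma~\ref{lem:dCDepsilon} gives $\D \subseteq \mu\,\C$, and Lemma~\ref{lem:QkLambdaCD} then yields the chain $\Q_k^\lambda(\C) \subseteq \Q_k^\lambda(\D) \subseteq \Q_k^\lambda(\mu\,\C)$, so that for every $\xi \in \S$ one has $\rho(\xi,\Q_k^\lambda(\C)) \leq \rho(\xi,\Q_k^\lambda(\D)) \leq \rho(\xi,\Q_k^\lambda(\mu\,\C))$. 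This produces the same first overestimate as~\eqref{eq:firstOverestimationDQnCQnD}, namely that $d(\Q_k^\lambda(\C),\Q_k^\lambda(\D))$ is bounded by $\sup_{\xi\in\S}\ln(\rho(\xi,\Q_k^\lambda(\mu\,\C))/\rho(\xi,\Q_k^\lambda(\C)))$. Everything here is dimension-independent and uses only Assumption~\ref{assum:XU}, so it carries over to arbitrary $k$ unchanged.

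The key simplification is that I do \emph{not} need the auxiliary states $\hat{x}$ of Part~(ii). The Theorem built the $v_i$ satisfying~\eqref{eq:rhoHatXHatJ}--\eqref{eq:rhoHatXHatN} precisely to drive the state to the origin and thereby gain a strictly positive margin $\gamma > 0$; since the Corollary only claims contraction factor $1$, I can dispense with this. Concretely, fix $\xi \in \S$, let $\varrho := \rho(\xi,\Q_k^\lambda(\mu\,\C))$ and $x := \varrho\,\xi \in \Q_k^\lambda(\mu\,\C)$. By Lemma~\ref{lem:conditionsXinQkLambda} there are inputs $u_0,\dots,u_{k-1} \in \U$ and $\gamma^\ast \in \C$ realizing the analogue of~\eqref{eq:rhoXHatJ}--\eqref{eq:rhoXHatN} with $\mu\,\lambda^k\gamma^\ast$ on the right. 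Taking the scaled point $\varrho^\ast := \varrho/\mu$ together with the scaled inputs $w_i := u_i/\mu$ and using convexity of $\X$ and $\U$ (both contain the origin, so $\frac{1}{\mu}\X \subseteq \X$ and $\frac{1}{\mu}\,u_i \in \U$), the membership conditions are preserved and the terminal equation becomes $\lambda^k\gamma^\ast$ with $\gamma^\ast \in \C$. Hence $(\varrho/\mu)\,\xi \in \Q_k^\lambda(\C)$ by Lemma~\ref{lem:conditionsXinQkLambda}, giving $\rho(\xi,\Q_k^\lambda(\C)) \geq \varrho/\mu = \rho(\xi,\Q_k^\lambda(\mu\,\C))/\mu$. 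Therefore each ratio inside the supremum is at most $\mu$, so the supremum is at most $\ln(\mu) = \delta = d(\C,\D)$, which is exactly~\eqref{eq:weakerRelation}.

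I expect the only subtlety to be the verification that scaling by $1/\mu$ genuinely respects all the constraints simultaneously. The terminal condition is an equality, so scaling both sides by $1/\mu$ is immediate; the intermediate state constraints are the containments $A^j\varrho\,\xi + \sum_{i} A^{j-1-i}B\lambda^i u_i \in \mu\,\lambda^j\X$, and dividing by $\mu$ places the scaled left-hand side in $\lambda^j\X$, using $\frac{1}{\mu}(\mu\,\lambda^j\X) = \lambda^j\X$. The input feasibility $\frac{1}{\mu}u_i \in \U$ follows from $0 \in \U$ and convexity, since $\frac{1}{\mu}u_i = \frac{1}{\mu}u_i + (1-\frac{1}{\mu})\cdot 0$. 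No full-rank or invertibility argument enters, which is precisely why the Corollary holds for all $k$ while the sharper Theorem requires $k=n$ and Assumption~\ref{assum:AB}.
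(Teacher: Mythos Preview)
Your proposal is correct and follows precisely the route the paper indicates: reproduce Part~(i) of the proof of Theorem~\ref{thm:dQnCDLeqEtaDCD}, then drop the controllability-dependent construction~\eqref{eq:rhoHatXHatJ}--\eqref{eq:rhoHatXHatN} (equivalently, set $\hat\varrho=0$ and $v_i=0$), which collapses~\eqref{eq:rhoAst} to $w_i=u_i/\mu$, $\varrho^\ast=\varrho/\mu$ and yields $\eta=1$ in~\eqref{eq:finalObservation}. One small slip: the intermediate constraints from Lemma~\ref{lem:conditionsXinQkLambda} are actually $\in\lambda^j\X$, not $\in\mu\,\lambda^j\X$, but this only strengthens your scaling step since $\tfrac{1}{\mu}\lambda^j\X\subseteq\lambda^j\X$ anyway.
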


Theorem~\ref{thm:dQnCDLeqEtaDCD} and Cor.~\ref{cor:dQkLambdaLeqD} lead to Lem.~\ref{lem:appropriateKCD}, which will be instrumental to prove Thms.~\ref{thm:QkLambda1PlusEpsilonCmaxLambda} and~\ref{thm:muC1MaxSubsetCLambdaMax} in the next section.
As a preparation, we state the following corollary, which summarizes the choice of a suitable contraction factor $\eta$ according to the proof of Thm.~\ref{thm:dQnCDLeqEtaDCD}. 

\begin{corollary}
\label{cor:choiceOfEta}
Let $\lambda \in (0,1]$ and let Assums.~\ref{assum:AB} and~\ref{assum:XU} be satisfied. Moreover, let $\lb{r}_x$, $\ub{r}_x$, and $\lb{r}_u$ with $0<\lb{r}_x \leq  \ub{r}_x$ and $0<\lb{r}_u$ be such that~\eqref{eq:rUrX} holds, define $\alpha$ and $\Phi_n$ according to~\eqref{eq:alpha} and~\eqref{eq:matricesPhi}, respectively,  and choose  $\hat{\varrho}$ as in~\eqref{eq:choiceOfRho}.
 Then, $\eta = 1 - \frac{\hat{\varrho}}{\ub{r}_x}$ is such that~\eqref{eq:dQnCDLeqEtaDCD} holds for all C-sets $\C,\D \subset \R^n$ with $\C \subseteq \D$.
\end{corollary}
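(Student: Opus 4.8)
The plan is to recognize that Corollary~\ref{cor:choiceOfEta} is essentially a bookkeeping restatement of the construction already carried out in the proof of Theorem~\ref{thm:dQnCDLeqEtaDCD}: there, the radii $\lb{r}_x$, $\ub{r}_x$, $\lb{r}_u$ were introduced only as \emph{some} numbers satisfying~\eqref{eq:rUrX}, and the contraction factor $\eta = 1 - \hat{\varrho}/\ub{r}_x$ was built from them. To prove the corollary, I would therefore verify that \emph{no} step of that proof exploits a particular numerical value of the radii, so that the argument goes through verbatim for any admissible triple.

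Concretely, I would retrace the proof and isolate every place where the radii enter. There are only four: the definition~\eqref{eq:choiceOfRho} of $\hat{\varrho}$ (which uses $\lb{r}_x$ and $\lb{r}_u$ together with $\alpha$ from~\eqref{eq:alpha} and $\Phi_n$ from~\eqref{eq:matricesPhi}); the norm bound that certifies $v_i \in \B^m(\lb{r}_u) \subseteq \U$; the verification of~\eqref{eq:rhoHatXHatJ} via $\B^n(\lambda^{n-1}\lb{r}_x) \subseteq \lambda^{j}\X$; and the overestimate~\eqref{eq:overestimationSupRho}, which invokes $\sup_{\xi \in \S} \rho(\xi,\X) \leq \ub{r}_x$. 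In each case the only property used is the inclusion chain $\B^n(\lb{r}_x) \subseteq \X \subseteq \B^n(\ub{r}_x)$ together with $\B^m(\lb{r}_u) \subseteq \U$, i.e., precisely~\eqref{eq:rUrX}. I would also record that $\alpha$ and $\Phi_n$ are defined exactly as stated in the corollary, and that Assumption~\ref{assum:AB} is what guarantees $\sigma_{\min}(\Phi_n)>0$, hence $\hat{\varrho}>0$ and the well-definedness of the pseudoinverse bound~\eqref{eq:thetaNBound}.

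Granting the above, the whole chain of inequalities culminating in~\eqref{eq:finalObservation} remains valid word for word, and thus $\eta = 1 - \hat{\varrho}/\ub{r}_x$ satisfies~\eqref{eq:dQnCDLeqEtaDCD} for all C-sets $\C \subseteq \D$. The only point that genuinely needs rechecking---and the one I would flag as the main (though minor) obstacle---is that $\eta \in [0,1)$, equivalently $\gamma := \hat{\varrho}/\ub{r}_x \in (0,0.5]$. In the theorem's proof this was read off from $\hat{\varrho} \leq \lb{r}_x/2$; with a general $\ub{r}_x$ one must instead combine $\hat{\varrho} \leq \lb{r}_x/2$ (which itself follows from $\lambda \leq 1$, $\alpha \geq 1$, and $\sigma_{\max}(\Phi_n) \geq \sigma_{\min}(\Phi_n) > 0$) with the corollary's hypothesis $\ub{r}_x \geq \lb{r}_x$ to obtain $\gamma = \hat{\varrho}/\ub{r}_x \leq \lb{r}_x/(2\,\ub{r}_x) \leq 1/2$. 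Since $\hat{\varrho}>0$ gives $\gamma>0$, this yields $\eta \in [0.5,1) \subset [0,1)$ and completes the argument.
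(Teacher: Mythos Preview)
Your proposal is correct and matches the paper's approach: the paper does not give a separate proof of Corollary~\ref{cor:choiceOfEta} but simply introduces it as a summary of the construction already carried out in the proof of Theorem~\ref{thm:dQnCDLeqEtaDCD}. Your careful retracing of where the radii enter, together with the explicit check that $\gamma \in (0,0.5]$ via $\hat{\varrho} \leq \lb{r}_x/2 \leq \ub{r}_x/2$, is exactly the verification implicit in the paper's presentation.
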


\begin{lemma}
\label{lem:appropriateKCD}
Let $\lambda \in (0,1]$ and  $\delta>0$,  let $\C,\D \subset \R^n$ be C-sets with $\C \subseteq \D$, and let Assums.~\ref{assum:AB} and~\ref{assum:XU} be satisfied.
Choose $\eta$ according to Cor.~\ref{cor:choiceOfEta}.
 Then  $d(\Q_k^\lambda(\C), \Q_k^\lambda(\D)) \leq \delta$ 
 for every $k\in \Nat$ with
\begin{equation}
\label{eq:kGeg2Cases}
k \geq \left\{ \begin{array}{ll}
n\, \left\lceil \frac{\ln(\delta)-\ln(d(\C,\D))}{\ln(\eta)} \right\rceil & \text{if}\,\,d(\C,\D) > \delta, \\
0 & \text{otherwise}. 
\end{array}\right.
\end{equation}
\end{lemma}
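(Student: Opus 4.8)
The plan is to combine the two estimates already established: the $n$-step contraction of Theorem~\ref{thm:dQnCDLeqEtaDCD}, which shrinks the distance by a factor $\eta$ every $n$ iterations, and the monotone bound of Corollary~\ref{cor:dQkLambdaLeqD}, which guarantees that the distance never grows under any number of iterations. The case split in~\eqref{eq:kGeg2Cases} reflects precisely these two tools. The trivial branch $d(\C,\D)\leq\delta$ is handled directly by Corollary~\ref{cor:dQkLambdaLeqD}, which yields $d(\Q_k^\lambda(\C),\Q_k^\lambda(\D))\leq d(\C,\D)\leq\delta$ for every $k\in\Nat$ (including $k=0$, since $\Q_0^\lambda$ is the identity by~\eqref{eq:QkLambdaSequence}); this covers the ``otherwise'' line.

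For the main case $d(\C,\D)>\delta$, the core step is to iterate the $n$-step contraction. Using the composition property $\Q_{a+b}^\lambda=\Q_a^\lambda\circ\Q_b^\lambda$ (immediate from~\eqref{eq:QkLambdaSequence}), the inclusion $\Q_{pn}^\lambda(\C)\subseteq\Q_{pn}^\lambda(\D)$ from Lemma~\ref{lem:QkLambdaCD}, and the fact that all iterates are C-sets by Lemma~\ref{lem:QkLambdaCSet}, I would prove by induction on $p\in\Nat$ that
\[
d(\Q_{pn}^\lambda(\C),\Q_{pn}^\lambda(\D))\leq\eta^{p}\,d(\C,\D),
\]
each inductive step applying Theorem~\ref{thm:dQnCDLeqEtaDCD} to the admissible pair $\Q_{(p-1)n}^\lambda(\C)\subseteq\Q_{(p-1)n}^\lambda(\D)$.

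To reach the target accuracy I then demand $\eta^{p}d(\C,\D)\leq\delta$. Since the $\eta$ of Corollary~\ref{cor:choiceOfEta} satisfies $\eta\in[0.5,1)$, we have $\ln(\eta)<0$; solving for $p$ and reversing the inequality upon dividing by $\ln(\eta)$ gives $p\geq(\ln(\delta)-\ln(d(\C,\D)))/\ln(\eta)$, whose right-hand side is positive because $d(\C,\D)>\delta$. The smallest admissible integer is $p=\lceil(\ln(\delta)-\ln(d(\C,\D)))/\ln(\eta)\rceil$, so $k=pn$ reproduces~\eqref{eq:kGeg2Cases} exactly. Finally, to cover every $k\geq pn$ and not merely multiples of $n$, I would write $k=pn+r$ with $r\geq0$, use the composition property to obtain $\Q_k^\lambda(\C)=\Q_r^\lambda(\Q_{pn}^\lambda(\C))$, and apply Corollary~\ref{cor:dQkLambdaLeqD} one last time to the C-sets $\Q_{pn}^\lambda(\C)\subseteq\Q_{pn}^\lambda(\D)$, concluding $d(\Q_k^\lambda(\C),\Q_k^\lambda(\D))\leq d(\Q_{pn}^\lambda(\C),\Q_{pn}^\lambda(\D))\leq\delta$.

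There is no deep obstacle here; the two nontrivial points of care are the correct sign handling when dividing by $\ln(\eta)<0$ (which turns the ceiling into a \emph{lower} bound on $k$), and the bookkeeping that extends the bound from multiples of $n$ to arbitrary $k\geq pn$ via the non-expansiveness in Corollary~\ref{cor:dQkLambdaLeqD}. Both rely only on the inclusion and C-set properties supplied by Lemmas~\ref{lem:QkLambdaCD} and~\ref{lem:QkLambdaCSet}, so each invocation of the two estimates is legitimate.
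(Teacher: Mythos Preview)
Your proposal is correct and follows essentially the same approach as the paper: handle the trivial branch via Corollary~\ref{cor:dQkLambdaLeqD}, iterate Theorem~\ref{thm:dQnCDLeqEtaDCD} to obtain $d(\Q_{jn}^\lambda(\C),\Q_{jn}^\lambda(\D))\leq\eta^{j}d(\C,\D)$, and solve $\eta^{j}d(\C,\D)\leq\delta$ for~$j$. Your write-up is in fact more careful than the paper's own proof, since you explicitly justify the extension from multiples of $n$ to arbitrary $k\geq pn$ via Corollary~\ref{cor:dQkLambdaLeqD}, a step the paper leaves implicit.
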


\begin{proof} 
If $d(\C,\D)\leq \delta$, we find $d(\Q_k^\lambda(\C), \Q_k^\lambda(\D)) \leq \delta$ for every $k \in \Nat$ according to Cor.~\ref{cor:dQkLambdaLeqD}.
It remains to address the case $d(\C,\D)> \delta$.
Theorem~\ref{thm:dQnCDLeqEtaDCD} implies $d(\Q_{j n}^\lambda(\C), \Q_{j n}^\lambda(\D)) \leq \eta^{j} d(\C,\D)$ for every $j \in \Nat$.
Clearly, $\eta^{j} d(\C,\D) \leq \delta$ if 
$$
j \geq \frac{\ln(\delta)-\ln(d(\C,\D)}{\ln(\eta)}.
$$ 
Taking $j \in \Nat$ into account leads to~\eqref{eq:kGeg2Cases}. 
  \end{proof}

\section{Implications}

In this section, we provide formal proofs for the new results on $\lambda$-contractive sets stated in the introduction and summarized in
Thms.~\ref{thm:QkLambda1PlusEpsilonCmaxLambda} and~\ref{thm:muC1MaxSubsetCLambdaMax}.

\begin{theorem}
\label{thm:QkLambda1PlusEpsilonCmaxLambda}
Let $\lambda \in (0,1]$ and $\epsilon>0$, let Assums.~\ref{assum:AB} and~\ref{assum:XU} be satisfied, and let $\C$ be any $\lambda$-contractive C-set.
Set $\D=\X$ and $\delta=\ln(1+\epsilon)$, choose $\eta$ according to Cor.~\ref{cor:choiceOfEta}, and let $k$ be such that~\eqref{eq:kGeg2Cases} holds.
Then
\begin{equation}
\label{eq:QkLambdaXSubsetQkLambdaC}
\Q_k^\lambda(\X) \subseteq (1+\epsilon)\, \Q_k^\lambda(\C) \subseteq (1+\epsilon)\, \C_{\max}^{\lambda}.
\end{equation}
\end{theorem}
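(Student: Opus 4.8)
The plan is to prove the two inclusions in~\eqref{eq:QkLambdaXSubsetQkLambdaC} separately. The left inclusion is where the a priori iteration count from Lem.~\ref{lem:appropriateKCD} does the work, whereas the right inclusion is the classical statement that every iterate of a $\lambda$-contractive set stays inside $\C_{\max}^\lambda$.

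For the left inclusion I would first record that $\C \subseteq \X = \D$ (a $\lambda$-contractive set is by Def.~\ref{def:lambdaContractive} a subset of $\X$) and that both $\C$ and $\X$ are C-sets. With the given choices $\delta = \ln(1+\epsilon)$, $\eta$ from Cor.~\ref{cor:choiceOfEta}, and $k$ satisfying~\eqref{eq:kGeg2Cases}, Lem.~\ref{lem:appropriateKCD} yields $d(\Q_k^\lambda(\C),\Q_k^\lambda(\X)) \leq \delta$. Since $\Q_k^\lambda(\C) \subseteq \Q_k^\lambda(\X)$ by monotonicity (Lem.~\ref{lem:QkLambdaCD}) and both iterates are C-sets by Lem.~\ref{lem:QkLambdaCSet}, I would then invoke the equivalence~\eqref{eq:dImplications} of Lem.~\ref{lem:dCDepsilon} with these two sets to turn the distance bound into the inclusion $\Q_k^\lambda(\X) \subseteq \exp(\delta)\,\Q_k^\lambda(\C)$. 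Because $\exp(\delta) = 1+\epsilon$, this is exactly $\Q_k^\lambda(\X) \subseteq (1+\epsilon)\,\Q_k^\lambda(\C)$.

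For the right inclusion it suffices, after scaling both sides by $(1+\epsilon)$, to show $\Q_k^\lambda(\C) \subseteq \C_{\max}^\lambda$. I would establish that every iterate $\Q_k^\lambda(\C)$ is itself $\lambda$-contractive, so that the containment follows directly from the definition of $\C_{\max}^\lambda$ as the union of all $\lambda$-contractive sets (Def.~\ref{def:lambdaContractive}). To this end, I would first observe that $\lambda$-contractivity of $\C$ gives $\C \subseteq \Q_1^\lambda(\C)$ via~\eqref{eq:Q1Lambda}, and that monotonicity of $\Q_1^\lambda$ (Lem.~\ref{lem:QkLambdaCD}) then makes the sequence $\{\Q_j^\lambda(\C)\}$ nested increasing, i.e.\ $\Q_{k-1}^\lambda(\C) \subseteq \Q_k^\lambda(\C)$. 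Given any $x \in \Q_k^\lambda(\C) = \Q_1^\lambda(\Q_{k-1}^\lambda(\C))$ with $k\geq 1$, the definition~\eqref{eq:Q1Lambda} supplies $u \in \U$ with $Ax+Bu \in \lambda\,\Q_{k-1}^\lambda(\C) \subseteq \lambda\,\Q_k^\lambda(\C)$ together with $x \in \X$, so $\Q_k^\lambda(\C)$ is $\lambda$-contractive (the case $k=0$ being immediate since $\Q_0^\lambda(\C)=\C$).

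I expect the left inclusion to be the crux, as it is the step that actually exploits the a priori iteration count: the delicate point is matching the choices $\delta = \ln(1+\epsilon)$ and the $k$ from~\eqref{eq:kGeg2Cases} so that the metric bound of Lem.~\ref{lem:appropriateKCD} feeds cleanly into the equivalence~\eqref{eq:dImplications}, which in turn requires checking the C-set and inclusion hypotheses of Lem.~\ref{lem:dCDepsilon}. The right inclusion is comparatively routine; its only subtlety is recognizing that the nestedness of the sequence is precisely what renders each iterate self-$\lambda$-contractive.
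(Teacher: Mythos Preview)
Your proposal is correct and follows essentially the same route as the paper's proof: use Lem.~\ref{lem:appropriateKCD} together with Lem.~\ref{lem:dCDepsilon} for the left inclusion, and the $\lambda$-contractivity of each iterate $\Q_k^\lambda(\C)$ for the right one. You simply spell out more of the details (the nestedness argument and the verification of the C-set hypotheses for Lem.~\ref{lem:dCDepsilon}) that the paper leaves as ``easy to show''.
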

\begin{proof}
Since $\C$ is $\lambda$-contractive, it is easy to show that $\Q_k^\lambda(\C)$ is $\lambda$-contractive for every $k \in \Nat$. Thus, we  have $\Q_k^\lambda(\C) \subseteq \C_{\max}^\lambda $ 
for every $k \in \Nat$, which proves the second relation in~\eqref{eq:QkLambdaXSubsetQkLambdaC}.
For a $k$ satisfying~\eqref{eq:kGeg2Cases}, we obtain 
$d(\Q_k^\lambda(\C), \Q_k^\lambda(\X)) \leq \delta$
according to Lem.~\ref{lem:appropriateKCD}.
Hence, the first relation in~\eqref{eq:QkLambdaXSubsetQkLambdaC} holds according to Lem.~\ref{lem:dCDepsilon} and due to the choice of $\delta$.
  \end{proof}

\begin{remark}
\label{rem:lambdaCSet}
As mentioned in the introduction, existence of a $k \in \Nat$ satisfying~\eqref{eq:QkLambda1PlusEpsilonCmaxLambda} for given $\lambda$ and $\epsilon$ requires $\C_{\max}^\lambda$ to be a C-set.
A necessary and sufficient condition for $\C_{\max}^\lambda$ to be a C-set is the existence of some $\lambda$-contractive C-set $\C$.
Clearly, the explicit knowledge of such a set $\C$, as required in Thm.~\ref{thm:QkLambda1PlusEpsilonCmaxLambda},
is more restrictive than the fundamental assumption that $\C_{\max}^\lambda$ is a C-set. However, there exist a number of procedures to identify (small) $\lambda$-contractive C-sets.
Assume, for example, there exist $ K \in \R^{m \times n}$ and a positive definite matrix $P \in \R^{n \times n}$ such that
$(A+BK)^T P (A+BK)\preceq \lambda^2 P$  and such that $A+BK$ is Schur stable.
Now choose any $\beta>0$ such that
$\C=\{ x \in \R^n \, | \, x^T P x \leq \beta \} \subseteq \{ x \in \X \,|\, K \,x \in \U\}$.
Then, $\C$ is a $\lambda$-contractive ellipsoid and thus a $\lambda$-contractive C-set (cf. \cite[Rem. 4.1]{Blanchini1994}). Similar procedures exist to compute $\lambda$-contractive polytopes (see, e.g., \cite[Sect. V]{Fiacchini2007}).
\end{remark}

Theorem~\ref{thm:muC1MaxSubsetCLambdaMax} further below addresses the suitable choice of $\lambda \in (0,1)$ to guarantee~\eqref{eq:muC1MaxSubsetCLambdaMax} for a given $\mu \in (0,1)$.
As a preparation, we provide the following two lemmas.

\begin{lemma}
\label{lem:QkLambda1}
Let $\lambda \in (0,1]$, let $\D\subset\R^n$ be a C-set, and let Assum.~\ref{assum:XU} be satisfied. Then  $\lambda^k\,\Q_k^1(\D) \subseteq \Q_k^\lambda(\D)$ for every $k\in \Nat$.
\end{lemma}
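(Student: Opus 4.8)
The plan is to prove the inclusion by induction on $k$, exploiting the recursive definition~\eqref{eq:QkLambdaSequence} together with the convexity of the constraint sets. The base case $k=0$ is immediate, since both sides reduce to $\D$: we have $\lambda^0\,\Q_0^1(\D) = \D = \Q_0^\lambda(\D)$.

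For the inductive step I would assume $\lambda^k\,\Q_k^1(\D) \subseteq \Q_k^\lambda(\D)$ and take an arbitrary $x \in \Q_{k+1}^1(\D) = \Q_1^1(\Q_k^1(\D))$. Unfolding the definition of $\Q_1^1$, this means $x \in \X$ and there exists $u \in \U$ with $Ax + Bu \in \Q_k^1(\D)$. The goal is to show $\lambda^{k+1} x \in \Q_{k+1}^\lambda(\D) = \Q_1^\lambda(\Q_k^\lambda(\D))$, i.e.\ to verify the two defining conditions of $\Q_1^\lambda$: that $\lambda^{k+1} x \in \X$, and that there is some $v \in \U$ with $A(\lambda^{k+1} x) + Bv \in \lambda\,\Q_k^\lambda(\D)$.

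The first condition is immediate because $\X$ is a C-set, hence convex and containing the origin, so that $\lambda^{k+1} x$ --- a convex combination of $x \in \X$ and $0 \in \X$ with weight $\lambda^{k+1} \in (0,1]$ --- again lies in $\X$. For the second, the key idea is to scale the input by the same factor, choosing $v := \lambda^{k+1} u$, which belongs to $\U$ by the identical convexity argument. Then
\[
A(\lambda^{k+1} x) + Bv = \lambda^{k+1}(Ax + Bu) = \lambda\,\bigl(\lambda^k (Ax + Bu)\bigr),
\]
and since $Ax + Bu \in \Q_k^1(\D)$ the inductive hypothesis yields $\lambda^k(Ax + Bu) \in \lambda^k\,\Q_k^1(\D) \subseteq \Q_k^\lambda(\D)$. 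Hence $A(\lambda^{k+1} x) + Bv \in \lambda\,\Q_k^\lambda(\D)$, which places $\lambda^{k+1} x$ in $\Q_{k+1}^\lambda(\D)$ and closes the induction.

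The argument is essentially routine; the only point requiring attention is recognizing the correct scaling $v = \lambda^{k+1} u$ of the control input and confirming that both the scaled state and the scaled input remain feasible. This rests entirely on the C-set property of $\X$ and $\U$ granted by Assum.~\ref{assum:XU}, so I do not anticipate a genuine obstacle beyond this bookkeeping.
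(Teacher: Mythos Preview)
Your proof is correct. Both you and the paper proceed by induction on $k$, and the underlying idea---that scaling a point of $\X$ (resp.\ $\U$) by a factor in $(0,1]$ keeps it inside $\X$ (resp.\ $\U$) because these are C-sets---is the same. The difference is purely in packaging: the paper works at the level of set inclusions, first establishing the two auxiliary facts $\lambda\,\Q_1^1(\T)\subseteq \Q_1^\lambda(\T)$ and $\mu\,\Q_1^1(\T)\subseteq \Q_1^1(\mu\,\T)$ via the rewriting~\eqref{eq:Q1LambdaT}, and then chaining them with monotonicity; you instead take an arbitrary element and exhibit the witness $v=\lambda^{k+1}u$ directly. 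Your elementwise argument is shorter and more transparent for this lemma; the paper's decomposition has the minor advantage of isolating the one-step inclusions $\lambda\,\Q_1^1(\T)\subseteq \Q_1^\lambda(\T)$ and $\mu\,\Q_1^1(\T)\subseteq \Q_1^1(\mu\,\T)$ as standalone facts, which could in principle be reused elsewhere.
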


\begin{proof}
The relation holds with equality for $k=0$.
We prove the relation for $k>0$ by induction.
First note that, for any C-set $\T \subset \R^n$, $\Q_1^\lambda(\T)$ can be written as
\begin{equation}
\label{eq:Q1LambdaT}
\Q_1^\lambda(\T)=\lambda\,\{\tilde{x} \in \lambda^{-1}\X \,|\, \exists \tilde{u} \in \lambda^{-1}\,\U : A\tilde{x} +B \tilde{u} \in \T \}.
\end{equation}
Since $\lambda\leq 1$ implies $\lambda^{-1}\X \supseteq \X$ and $\lambda^{-1}\,\U \supseteq \U$, we obtain $\lambda\,\Q_1^1(\T) \subseteq \Q_1^\lambda(\T)$.
To show that $\lambda^{k}\,\Q_{k}^1(\D) \subseteq \Q_{k}^\lambda(\D)$ implies $\lambda^{k+1}\,\Q_{k+1}^1(\D) \subseteq \Q_{k+1}^\lambda(\D)$, first note that
\begin{equation}
\label{eq:QKPlusLambda}
\!\Q_{k+1}^\lambda(\D)\!=\!\Q_1^\lambda(\Q_k^\lambda(\D)) \!\supseteq\! \Q_1^\lambda(\lambda^k\,\Q_k^1(\D)) \!\supseteq\!\lambda\,\Q_1^1(\lambda^k\,\Q_k^1(\D)).\!\!\!\!
\end{equation}
Now, rewriting $\Q_1^1(\mu\,\T)$ in the style of~\eqref{eq:Q1LambdaT} for some $\mu \in (0,1]$, it is easy to show that $\mu\,\Q_1^1(\T) \subseteq \Q_1^1(\mu\,\T)$. We consequently find 
$\lambda^{k}\,\Q_1^1(\T) \subseteq \Q_1^1(\lambda^k\,\T)$. Taking~\eqref{eq:QKPlusLambda} into account, we finally infer
$$
\Q_{k+1}^\lambda(\D) \supseteq \lambda^{k+1}\,\Q_1^1(\Q_k^1(\D)) = \lambda^{k+1}\,\Q_{k+1}^1(\D),
$$
which completes the proof.  
  \end{proof}

\begin{lemma}
\label{lem:muLambdaK}
Let $\mu \in (0,1)$ and $\lambda^\ast \in (0,1)$,  let Assums.~\ref{assum:AB} and~\ref{assum:XU} be satisfied,  let $\C$ be a $\lambda^\ast$-contractive C-set, and set $\epsilon = \frac{1-\mu}{2\,\mu}$.
Then, there exist $\lambda \in [\lambda^\ast,1)$ and $k\in \Nat$
 such that~\eqref{eq:QkLambdaXSubsetQkLambdaC} holds and such that
\begin{equation}
\label{eq:muLambdaK}
1+\mu \leq 2\,\lambda^k.
\end{equation} 
\end{lemma}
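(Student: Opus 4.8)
The plan is to \emph{decouple} the two requirements by first fixing the number of iterations $k$ and only afterwards pushing $\lambda$ towards $1$. First I would record the elementary facts that drive the argument. Since $\mu \in (0,1)$ we have $\epsilon = \frac{1-\mu}{2\mu} > 0$ and $1+\epsilon = \frac{1+\mu}{2\mu}$, so that $\delta := \ln(1+\epsilon)$ is a fixed positive number independent of $\lambda$. Moreover, because $\C$ is a $\lambda^\ast$-contractive C-set, hence convex with the origin in its interior, the inclusion $\lambda^\ast\,\C = \frac{\lambda^\ast}{\lambda}(\lambda\,\C) \subseteq \lambda\,\C$ holds for every $\lambda \in [\lambda^\ast,1]$; consequently $\C$ is $\lambda$-contractive for all such $\lambda$ (cf.\ Def.~\ref{def:lambdaContractive}), so Thm.~\ref{thm:QkLambda1PlusEpsilonCmaxLambda} is applicable with $\D=\X$, this $\delta$, and any $\lambda \in [\lambda^\ast,1)$. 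Note also that $\C \subseteq \X$, so $d(\C,\X)$ is a well-defined, $\lambda$-independent quantity.

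The crucial step is a monotonicity observation. I would define $\bar{k}$ to be the right-hand side of~\eqref{eq:kGeg2Cases} evaluated for $\lambda=\lambda^\ast$, with $\eta(\lambda^\ast)$ chosen according to Cor.~\ref{cor:choiceOfEta}; by Thm.~\ref{thm:dQnCDLeqEtaDCD} we have $\eta(\lambda^\ast) \in [0.5,1)$, so $\bar{k}$ is finite. I then claim that the single choice $k=\bar{k}$ makes~\eqref{eq:kGeg2Cases} hold for \emph{every} $\lambda \in [\lambda^\ast,1)$. Indeed, inspecting~\eqref{eq:choiceOfRho} shows $\hat{\varrho}(\lambda) = \lambda^{n-1}\,\hat{\varrho}(1)$, with the remaining data $\lb{r}_x,\ub{r}_x,\lb{r}_u,\alpha,\Phi_n$ all independent of $\lambda$; hence $\eta(\lambda) = 1 - \hat{\varrho}(\lambda)/\ub{r}_x$ is non-increasing in $\lambda$, so $\eta(\lambda) \leq \eta(\lambda^\ast)$ for all $\lambda \geq \lambda^\ast$. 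In the nontrivial case $d(\C,\X) > \delta$ the numerator $\ln(\delta)-\ln(d(\C,\X))$ of the bound in~\eqref{eq:kGeg2Cases} is negative and $\ln\eta<0$, so the fraction is increasing in $\eta$, meaning the required lower bound on $k$ is largest at $\lambda=\lambda^\ast$; thus $\bar{k}$ dominates it for every $\lambda \in [\lambda^\ast,1)$. Applying Thm.~\ref{thm:QkLambda1PlusEpsilonCmaxLambda} therefore yields~\eqref{eq:QkLambdaXSubsetQkLambdaC} with $k=\bar{k}$ for any admissible $\lambda$. (In the degenerate case $d(\C,\X)\leq\delta$ we have $\bar{k}=0$ and~\eqref{eq:QkLambdaXSubsetQkLambdaC} holds for all $\lambda$ already.)

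With $k=\bar{k}$ now fixed, the second requirement becomes easy to satisfy. I would set $\lambda_0 := \big(\tfrac{1+\mu}{2}\big)^{1/\bar{k}}$ when $\bar{k}>0$ (and $\lambda_0:=\lambda^\ast$ when $\bar{k}=0$); since $\tfrac{1+\mu}{2}<1$ we have $\lambda_0<1$. Choosing $\lambda := \max\{\lambda^\ast,\lambda_0\} \in [\lambda^\ast,1)$ then gives $\lambda^{\bar{k}} \geq \lambda_0^{\bar{k}} = \tfrac{1+\mu}{2}$, i.e.\ $1+\mu \leq 2\,\lambda^{\bar{k}}$, which is~\eqref{eq:muLambdaK}, while~\eqref{eq:QkLambdaXSubsetQkLambdaC} still holds by the previous paragraph. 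I expect the main obstacle to be precisely the monotonicity bookkeeping of the middle paragraph: one must verify that increasing $\lambda$ towards $1$ does not force a larger iteration count, which is exactly what permits a single value $k=\bar{k}$ to be used uniformly and thereby lets the two competing conditions be met at once. The remaining verifications (that $\C$ inherits $\lambda$-contractivity and that $\lambda^{\bar{k}}$ can be brought above $\tfrac{1+\mu}{2}$) are routine.
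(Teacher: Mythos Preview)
Your argument is correct and follows essentially the same route as the paper's proof: compute the iteration bound at $\lambda=\lambda^\ast$, use the monotonicity of $\hat{\varrho}$ (and hence of $\eta$) in $\lambda$ to conclude that this same $k$ remains valid for every $\lambda\in[\lambda^\ast,1)$, and then push $\lambda$ up to $\big(\tfrac{1+\mu}{2}\big)^{1/k}$ if necessary. The paper organizes this as a two-case argument (either $\lambda^\ast$ already satisfies~\eqref{eq:muLambdaK}, or one updates $\lambda$ via~\eqref{eq:lambdaUpdate} and rechecks~\eqref{eq:kGeg2Cases} using the monotonicity of $\eta$), whereas you state the uniform validity of $\bar{k}$ first and then take $\lambda=\max\{\lambda^\ast,\lambda_0\}$; these are the same argument with the case distinction absorbed into the $\max$.
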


\begin{proof}
Set $\lambda = \lambda^\ast$,  $\D=\X$, and $\delta = \ln(1+\epsilon)$ and let $\lb{r}_x$, $\ub{r}_x$, $\lb{r}_u$, $\alpha$ and $\Phi_n$ be as in Cor.~\ref{cor:choiceOfEta}.
Choose  $\hat{\varrho}$ as in~\eqref{eq:choiceOfRho}, set $\eta=1 - \frac{\hat{\varrho}}{\ub{r}_x}$,
and pick any $k \in \Nat$ that satisfies~\eqref{eq:kGeg2Cases}.
Then, relation~\eqref{eq:QkLambdaXSubsetQkLambdaC} holds
according to Thm.~\ref{thm:QkLambda1PlusEpsilonCmaxLambda}.
However, we either have (i)  $1+\mu \leq 2\,(\lambda^\ast)^k$ or (ii) $1+\mu > 2\,(\lambda^\ast)^k$. Case (i) immediately finishes the proof.

In contrast, if case (ii) applies, first note that we have $k>0$ (since $1+\mu >2 (\lambda^\ast)^0 = 2$ contradicts $\mu<1$). Now, 
 compute
\begin{equation}
\label{eq:lambdaUpdate}
\lambda = \exp \left( \frac{\ln(1+\mu)-\ln(2)}{k} \right)
\end{equation}
 and note that
$2 \lambda^{k} = 1+\mu$ and $\lambda \in (\lambda^\ast,1)$. Clearly, since $\lambda>\lambda^\ast$, the set $\C$ is also $\lambda$-contractive. Now, recompute $\hat{\varrho}$ according to~\eqref{eq:choiceOfRho} for the new value of $\lambda$ given by~\eqref{eq:lambdaUpdate}.
Note that the new $\hat{\varrho}$ is larger than the one that was obtained above with $\lambda=\lambda^\ast$.
Consequently, the recalculation of $\eta=1 - \frac{\hat{\varrho}}{\ub{r}_x}$ results in a smaller value than above.
Thus,  it is easy to see that 
$k$ as chosen above still satisfies~\eqref{eq:kGeg2Cases}.
This completes the proof, since \eqref{eq:QkLambdaXSubsetQkLambdaC} again holds 
according to Thm.~\ref{thm:QkLambda1PlusEpsilonCmaxLambda} and since~\eqref{eq:muLambdaK} is satisfied  by construction.
 \end{proof}

\begin{theorem}
\label{thm:muC1MaxSubsetCLambdaMax}
Let $\mu \in (0,1)$ and $\lambda^\ast \in (0,1)$,  let Assums.~\ref{assum:AB} and~\ref{assum:XU} be satisfied,  let $\C$ be a $\lambda^\ast$-contractive C-set, and set $\epsilon = \frac{1-\mu}{2\,\mu}$.
Assume $\lambda \in\! [\lambda^\ast,1)$ and $k\in\! \Nat$
are such that~\eqref{eq:QkLambdaXSubsetQkLambdaC} and~\eqref{eq:muLambdaK} hold.
Then
\begin{equation}
\label{eq:muCmax1SubsetQkLambdaCSubsetCmaxLambda}
\mu \,\C_{\max}^1 \subseteq  \Q_k^\lambda(\C) \subseteq \C_{\max}^\lambda.
\end{equation}
\end{theorem}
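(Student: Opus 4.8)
The plan is to establish the two inclusions in \eqref{eq:muCmax1SubsetQkLambdaCSubsetCmaxLambda} separately, since they rely on quite different ingredients. The right-hand inclusion $\Q_k^\lambda(\C) \subseteq \C_{\max}^\lambda$ should be immediate: the hypothesis \eqref{eq:QkLambdaXSubsetQkLambdaC} already contains $(1+\epsilon)\,\Q_k^\lambda(\C) \subseteq (1+\epsilon)\,\C_{\max}^\lambda$, and dividing by the positive scalar $1+\epsilon$ yields the claim. The work therefore concentrates on the left-hand inclusion $\mu\,\C_{\max}^1 \subseteq \Q_k^\lambda(\C)$.

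For the left-hand inclusion I would chain three known facts so as to bound $\C_{\max}^1$ from above by a scaled copy of $\Q_k^\lambda(\C)$. First, since $\C_{\max}^1$ is the maximal $1$-contractive set, the outer-approximation property \eqref{eq:CLambdaMaxOuterApprox} specialized to $\lambda=1$ gives $\C_{\max}^1 \subseteq \Q_k^1(\X)$. Second, Lemma~\ref{lem:QkLambda1} applied with $\D=\X$ (a C-set by Assum.~\ref{assum:XU}) yields $\lambda^k\,\Q_k^1(\X) \subseteq \Q_k^\lambda(\X)$. Third, the left-hand inclusion of \eqref{eq:QkLambdaXSubsetQkLambdaC} provides $\Q_k^\lambda(\X) \subseteq (1+\epsilon)\,\Q_k^\lambda(\C)$. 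Scaling the first relation by $\lambda^k$ and concatenating the three gives $\lambda^k\,\C_{\max}^1 \subseteq (1+\epsilon)\,\Q_k^\lambda(\C)$, equivalently $\tfrac{\lambda^k}{1+\epsilon}\,\C_{\max}^1 \subseteq \Q_k^\lambda(\C)$.

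It would then remain to verify that the achieved scale is at least $\mu$. Substituting $\epsilon=\tfrac{1-\mu}{2\mu}$ gives $1+\epsilon=\tfrac{1+\mu}{2\mu}$, so $\mu\,(1+\epsilon)=\tfrac{1+\mu}{2}$, and the hypothesis \eqref{eq:muLambdaK}, namely $1+\mu\leq 2\lambda^k$, is precisely $\mu\,(1+\epsilon)\leq\lambda^k$; hence the scalar $\tfrac{\mu(1+\epsilon)}{\lambda^k}$ lies in $(0,1]$. Taking any $x\in\mu\,\C_{\max}^1$, writing $x=\mu\,c$ with $c\in\C_{\max}^1$ and using $\tfrac{\lambda^k}{1+\epsilon}\,c\in\Q_k^\lambda(\C)$, one has $x=\tfrac{\mu(1+\epsilon)}{\lambda^k}\bigl(\tfrac{\lambda^k}{1+\epsilon}\,c\bigr)$; since $\Q_k^\lambda(\C)$ is a C-set by Lem.~\ref{lem:QkLambdaCSet} (convex and containing the origin) and the leading factor is in $(0,1]$, the point $x$ stays in $\Q_k^\lambda(\C)$, completing the proof.

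The only conceptual step is the middle one: bridging the $\lambda=1$ iterates and the $\lambda$-iterates via $\lambda^k\,\Q_k^1(\X)\subseteq\Q_k^\lambda(\X)$. I expect this to be the crux, but it is already supplied by Lemma~\ref{lem:QkLambda1}, so no genuine obstacle remains; the remaining delicate point is merely to confirm that the factor $\tfrac{\mu(1+\epsilon)}{\lambda^k}$ does not exceed $1$ and that monotonicity of C-set scalings is invoked on $\Q_k^\lambda(\C)$ rather than on $\C_{\max}^1$, whose convexity one thereby need not establish.
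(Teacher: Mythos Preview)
Your proposal is correct and follows essentially the same route as the paper: both combine $\C_{\max}^1 \subseteq \Q_k^1(\X)$ from~\eqref{eq:CLambdaMaxOuterApprox}, $\lambda^k\,\Q_k^1(\X)\subseteq\Q_k^\lambda(\X)$ from Lem.~\ref{lem:QkLambda1}, and~\eqref{eq:QkLambdaXSubsetQkLambdaC} to obtain $\tfrac{\lambda^k}{1+\epsilon}\,\C_{\max}^1\subseteq\Q_k^\lambda(\C)$, then use~\eqref{eq:muLambdaK} together with $1+\epsilon=\tfrac{1+\mu}{2\mu}$ to close the gap to $\mu$. The only cosmetic difference is that the paper performs the final scaling on $\C_{\max}^1$ (writing $\mu\,\C_{\max}^1\subseteq\tfrac{\lambda^k}{1+\epsilon}\,\C_{\max}^1$), whereas you perform it on $\Q_k^\lambda(\C)$ via Lem.~\ref{lem:QkLambdaCSet}; both are valid.
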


\begin{proof}
We have $\C^1_{\max} \subseteq \Q_k^1(\X)$ and $\lambda^k\Q_k^1(\X) \subseteq \Q_k^\lambda(\X)$ according to Eq.~\eqref{eq:CLambdaMaxOuterApprox} and Lem.~\ref{lem:QkLambda1}, respectively. Combining both relations and taking Eq.~\eqref{eq:QkLambdaXSubsetQkLambdaC} into account, yields
$$
\lambda^k\,\C^1_{\max} \subseteq \lambda^k\,\Q_k^1(\X) \subseteq \Q_k^\lambda(\X) \subseteq (1+\epsilon)\, \Q_k^\lambda(\C) \subseteq (1+\epsilon)\, \C_{\max}^{\lambda}.
$$
This proves~\eqref{eq:muCmax1SubsetQkLambdaCSubsetCmaxLambda} since we have
$$
\mu \,\C_{\max}^1 \subseteq \frac{2\lambda^k }{1+\mu} \,\mu \, \C_{\max}^1 = \frac{\lambda^k}{1+\epsilon} \, \C_{\max}^1
$$
due to~\eqref{eq:muLambdaK} and by definition of $\epsilon$, respectively.
  \end{proof}

\begin{remark}
\label{rem:ComputationLambdaSetTheory}
For the interpretation of Lem.~\ref{lem:muLambdaK} and Thm.~\ref{thm:muC1MaxSubsetCLambdaMax}, it is important to note that, for a given $\mu \in (0,1)$, suitable $\lambda \in [\lambda^\ast,1)$ and $k \in \Nat$ satisfying~\eqref{eq:QkLambdaXSubsetQkLambdaC} and~\eqref{eq:muLambdaK} can be computed without evaluating the sets $\Q_k^\lambda(\X)$, $\Q_k^\lambda(\C)$, or $\C_{\max}^\lambda$ in~\eqref{eq:QkLambdaXSubsetQkLambdaC}. In fact, 
we only require the computation of (i) a $\lambda^\ast$-contractive C-set $\C$, (ii) the distance $d(\C,\X)$ (e.g., according to~\eqref{eq:dComputation}), and (iii) $\eta$ as in Cor.~\ref{cor:choiceOfEta}. Then, suitable $\lambda$ and $k$ can be calculated according to the proof of Lem.~\ref{lem:muLambdaK}.
\end{remark}

\begin{remark}
\label{rem:ComputationLambdaSetPractice}
For practical applications,  having the guarantee that~\eqref{eq:muCmax1SubsetQkLambdaCSubsetCmaxLambda} holds without actually knowing (an approximation of) $\C_{\max}^\lambda$ is usually useless.
Fortunately, Thm.~\ref{thm:muC1MaxSubsetCLambdaMax} implicitly provides two methods to compute $\lambda$-contractive sets that approximate the maximal controlled invariant set $\C_{\max}^1$ with a given accuracy $\mu \in (0,1)$, presupposing that a $\lambda^\ast$-contractive set $\C$ is known.
To see this, first note that \textit{any} $\lambda \in [\lambda^\ast,1)$ and $k \in \Nat$ satisfying \eqref{eq:QkLambdaXSubsetQkLambdaC} and~\eqref{eq:muLambdaK} for $\epsilon = \frac{1-\mu}{2\,\mu}$ result in a $\lambda$-contractive set $\T=\Q_k^\lambda(\C)$ satisfying
$\mu \,\C_{\max}^1 \subseteq  \T$. 
Now, a suitable set $\T$ can be computed using two strategies.
First, we can compute $\lambda$ and $k$ according to the proof of Lem.~\ref{lem:muLambdaK} and simply evaluate $\Q_k^\lambda(\C)$ according to~\eqref{eq:QkLambdaSequence}.
This, however, might be numerically expensive since the computed $k$ is usually quite conservative.
Second, for $\lambda$ as above, we can compute $\Q_j^\lambda(\C)$ and $\Q_j^\lambda(\X)$ for increasing $j \in \Nat$ until $\Q_j^\lambda(\X) \subseteq (1+\epsilon)\, \Q_j^\lambda(\C)$ is observed for $j=k^\ast$. We obviously have $k^\ast \leq k$ by construction and thus 
$1+\mu \leq 2\,\lambda^k \leq 2\, \lambda^{k^\ast}$.
Consequently, \eqref{eq:QkLambdaXSubsetQkLambdaC} and~\eqref{eq:muLambdaK} hold and $\T=\Q_{k^\ast}^\lambda(\C)$ approximates $\C_{\max}^1$ accurately. The second strategy is numerically attractive, since $k^\ast$ is usually significantly smaller than $k$ from the proof of Lem.~\ref{lem:muLambdaK} (see the example in Sect.~\ref{subsec:ExampleThm12}). 
\end{remark}

\section{Illustrative examples}
\label{sec:example}

We analyze three examples to discuss the uses and limitations of Thms.~\ref{thm:QkLambda1PlusEpsilonCmaxLambda} and \ref{thm:muC1MaxSubsetCLambdaMax} as well as Thm.~\ref{thm:dQnCDLeqEtaDCD}.
In particular, we show how to compute suitable $k$ and $\lambda$ such that~\eqref{eq:QkLambda1PlusEpsilonCmaxLambda} and~\eqref{eq:muC1MaxSubsetCLambdaMax} hold \textit{without} evaluating   $\Q_k^\lambda(\X)$, $\C_{\max}^\lambda$, or $\C_{\max}^1$.
These numbers are then compared to the optimal (i.e., smallest possible) choices of $k$ and $\lambda$. It is easy to see that the optimal choices \textit{require} the knowledge of the sets $\Q_k^\lambda(\X)$, $\C_{\max}^\lambda$, or $\C_{\max}^1$. Since explicit descriptions of these sets are usually not available for complex systems, we consider relatively simple examples in the following.
We stress, however,  that the techniques in Thms.~\ref{thm:QkLambda1PlusEpsilonCmaxLambda} and \ref{thm:muC1MaxSubsetCLambdaMax} can be applied to more complex systems provided the requirements in Rem.~\ref{rem:ComputationLambdaSetTheory} can be satisfied.
 
\subsection{Discussion of Theorem~\ref{thm:QkLambda1PlusEpsilonCmaxLambda}}
\label{subsec:ExThm9}

Theorem~\ref{thm:QkLambda1PlusEpsilonCmaxLambda} makes it possible to compute an iteration bound $k$ such that \eqref{eq:QkLambdaXSubsetQkLambdaC} holds. In the following, we compare the provided bound with the smallest $k$ satisfying \eqref{eq:QkLambdaXSubsetQkLambdaC} for a simple example. 

Consider system~\eqref{eq:linSys} with
$A=1.1\, I_n$ and $B=I_n$ and constraints $\X=[-10,10]^n$ and $\U=[-1,1]^n$ for an arbitrary $n \in \Nat$ with $n>0$, where $I_n$ denotes  the identity matrix  in $\R^{n \times n}$.
Obviously, the system can be resolved into $n$ independent systems of dimension one. Nevertheless, the conglomerated system is useful to analyze Thm.~\ref{thm:QkLambda1PlusEpsilonCmaxLambda}.
In this context, first note that the set $\C = [-2,2]^n$
is $\lambda$-contractive for every $\lambda \in [0.6,1]$.
Moreover, it is easy to show that the maximal $\lambda$-contractive set is given by 
\begin{equation}
\label{eq:CMaxLambdaExample}
\C_{\max}^{\lambda} = [-c_{\max}^{\lambda},c_{\max}^\lambda]^n \qquad \text{with} \qquad c_{\max}^\lambda :=\frac{1}{1.1-\lambda}
\end{equation}
for this example.
We obviously  have $\C_{\max}^{\lambda}=\C$ for $\lambda = 0.6$ and $C_{\max}^{\lambda} = \X$ for $\lambda =1$.
Now, according to Thm.~\ref{thm:QkLambda1PlusEpsilonCmaxLambda}, for every $\lambda \in [0.6,1]$ and every $\epsilon >0$, there exists a $k \in \Nat$ such that~\eqref{eq:QkLambdaXSubsetQkLambdaC} holds.
Following Thm.~\ref{thm:QkLambda1PlusEpsilonCmaxLambda},
such a $k$ can be found by setting $\D=\X$ and $\delta = \ln(1+\epsilon)$,  selecting $\eta$
as in Cor.~\ref{cor:choiceOfEta}, and choosing $k$ such that~\eqref{eq:kGeg2Cases} holds.
To this end, first note that $\lb{r}_x = 10$, $\ub{r}_x = 10\sqrt{n}$, and $\lb{r}_u = 1$
are such that~\eqref{eq:rUrX} holds.
Moreover, $\alpha = 1.1^n$ satisfies~\eqref{eq:alpha}
and it is straightforward to show that
$$
\begin{array}{c}
\sigma_{\max}(\Phi_n)=\sigma_{\min}(\Phi_n)=\sqrt{\sum_{i=0}^{n-1} 1.1^{2 i}} = \sqrt{\frac{1.21^n -1 }{0.21}}.
\end{array}
$$
Thus, according to~\eqref{eq:choiceOfRho}, $\hat{\varrho}$ evaluates to
$$
\hat{\varrho} = \frac{\lambda^{n-1}}{1.1^n} \min \left\{ 5,\sqrt{\frac{1.21^n -1 }{0.21}} \right\}.
$$
Consequently, a suitable choice for $\eta$ is 
\begin{equation}
\label{eq:etaExample}
\eta = 1 - \frac{\hat{\varrho}}{\ub{r}_x}= 1 - \frac{\lambda^{n-1}}{10 \sqrt{n}\,1.1^n} \min \left\{ 5,\sqrt{\frac{1.21^n -1 }{0.21}} \right\}.
\end{equation}
It remains to choose $k$ satisfying~\eqref{eq:kGeg2Cases}.
Evaluating the distance between $\C$ and $\X$ according to~\eqref{eq:dComputation} results in $d(\C,\X)=\ln(5)$. 
Thus, the smallest $k$ that satisfies~\eqref{eq:kGeg2Cases} for  any $\epsilon \in (0,4)$ can be computed according to
\begin{equation}
\label{eq:kThmExample}
k =n \, \left\lceil \frac{\ln(\ln(1+\epsilon))-\ln(\ln(5))}{\ln(\eta)} \right \rceil
\end{equation}
with $\eta$ as in~\eqref{eq:etaExample}.
Numerical values for 
$n \in \{1,2\}$, $\lambda \in \{0.6,0.8,1.0\}$, and $\epsilon \in \{0.01,0.05,0.1\}$
are listed in Tab.~\ref{tab:kThmExample}.(a).

We next compare the results in Tab.~\ref{tab:kThmExample}.(a) with the smallest $k$ such that~\eqref{eq:QkLambdaXSubsetQkLambdaC} holds.
For this simple example, $\Q_k^\lambda(\C)$ and $\Q_k^\lambda(\X)$ can be stated explicitly as
\begin{equation}
\label{eq:QkLambdaCExample}
\Q_k^\lambda(\C) = [-c_k^\lambda,c_k^\lambda]^n
\,\,\,\,
\text{with}
\,\,\,\,
c^\lambda_k :=  2\,\left(\frac{\lambda}{1.1}\right)^k
+ \frac{1-\left(\frac{\lambda}{1.1}\right)^k}{1.1-\lambda}
\end{equation}
and
\begin{equation}
\label{eq:QkLambdaXExample}
\!\,\,\Q_k^\lambda(\X) = [-x_k^\lambda,x_k^\lambda]^n
\,\,\,\,
\text{with}
\,\,\,\,
x^\lambda_k := 10\, \left(\frac{\lambda}{1.1}\right)^k\!\!\!
+ \frac{1-\left(\frac{\lambda}{1.1}\right)^k}{1.1-\lambda}\!\!\!
\end{equation}
for every $k \in \Nat$ and any $\lambda \in [0.6,1]$.
Now, condition~\eqref{eq:QkLambdaXSubsetQkLambdaC} obviously holds for a given $k\in \Nat$ if (and only if)
\begin{equation}
\label{eq:kConditionExample}
x^\lambda_k \leq (1+\epsilon) \,c_{k}^\lambda.
\end{equation}
Clearly, \eqref{eq:kConditionExample} does not depend on $n$. In other words, the smallest $k$ such that~\eqref{eq:QkLambdaXSubsetQkLambdaC} holds does not change with~$n$ for this example. Apparently, this observation  is not echoed in Tab.~\ref{tab:kThmExample}.(a) (or Eq.~\eqref{eq:kThmExample}), where we clearly have a dependence on $n$.
Now, based on~\eqref{eq:QkLambdaXExample} and \eqref{eq:QkLambdaCExample}, it is easy to prove that the smallest $k$ satisfying~\eqref{eq:kConditionExample} reads
\begin{equation}
\label{eq:kExact}
k =\left\lceil \frac{\ln\left(\left(1.1-\lambda\right) \left(\frac{8}{\epsilon}-2 \right) +1 \right)}{\ln(1.1)-\ln(\lambda)} \right\rceil
\end{equation}
for a given $\lambda \in [0.6,1]$ and $\epsilon \in (0,4]$.
In Tab.~\ref{tab:kThmExample}.(b) numerical values for $k$ as in~\eqref{eq:kExact} are listed for $\lambda$ and $\epsilon$ as above.
Comparing the entries in Tabs.~\ref{tab:kThmExample}.(a) and \ref{tab:kThmExample}.(b), it turns out that
 the values for $k$ computed according to Thm.~\ref{thm:QkLambda1PlusEpsilonCmaxLambda} are valid but conservative.
In fact, the smallest overestimation, which is by a factor of $\frac{54}{47} \approx 1.1489$, occurs for $n=1$, $\lambda=1.0$, and $\epsilon = 0.01$.

\begin{table}[htp]
\caption{Numerical values for $k$ as in~\eqref{eq:kThmExample} and~\eqref{eq:kExact}, respectively,\newline as a function of  $n$, $\lambda$, and $\epsilon$.}
\label{tab:kThmExample}
\vspace{1mm}
\centering
\begin{tabular}{ccrrr}
\multicolumn{5}{c}{(a) values for $k$ as in~\eqref{eq:kThmExample}}\\[1mm]
\toprule
$n$ & $\lambda$ & \multicolumn{3}{c}{$\epsilon$} \\
\cmidrule{3-5}
&  & $0.01$ & $0.05$ & $0.1$ \\ 
\midrule
$2$ &   $0.6$ & $192$ &  $132$  & $106$  \\
$2$ &   $0.8$ & $142$ &   $98$  &  $80$  \\
$2$ &   $1.0$ & $112$  &  $78$   & $64$  \\
\cmidrule{1-2}
$1$ &   any & $54$  &  $37$  &  $30$ \\
\bottomrule
  \end{tabular}\hspace{15mm}
  \begin{tabular}{crrr}
  \multicolumn{4}{c}{(b) values for $k$ as in~\eqref{eq:kExact}}\\[1mm]
\toprule
$\lambda$ & \multicolumn{3}{c}{$\epsilon$} \\
 \cmidrule{2-4}
(any\,$n$) & $0.01$ &  $0.05$  & $0.1$ \\ 
\midrule
    $0.6$ & $10$  & $8$   & $7$   \\
   $0.8$ & $18$  & $13$  & $11$   \\
   $1.0$ & $47$  & $30$ & $23$ \\[5.6mm]
 \bottomrule
  \end{tabular}
\end{table}

Another observation is also interesting.
In Tab.~\ref{tab:kThmExample}.(b),  for fixed $\epsilon$, the values of $k$ increase with increasing $\lambda$. 
In contrast, for $n=2$ and fixed $\epsilon$, the values of $k$ in Tab.~\ref{tab:kThmExample}.(a) decrease with increasing $\lambda$.
In general, for some $\lambda^\ast \leq \lambda$ and $\D \subseteq \X$, we have $\C_{\max}^{\lambda^\ast} \subseteq \C_{\max}^{\lambda}$ and $Q_{k}^{\lambda^\ast}(\D) \subseteq Q_{k}^{\lambda}(\D)$ for every $k \in \Nat$.
In other words, larger values of $\lambda$ imply a larger set $\C_{\max}^\lambda$, slower contraction of $\{Q_{k}^\lambda(\X)\}$,
and faster expansion of $\{Q_k^\lambda(\C)\}$.
Clearly, this observation does not allow a general statement about the dependence of the smallest $k$, such that~\eqref{eq:QkLambdaXSubsetQkLambdaC} holds, on~$\lambda$.
In fact, depending on the example, we may observe a behavior similar or opposite to Tab.~\ref{tab:kThmExample}.(b) (i.e., $k$ increases or decreases with $\lambda$). 
In contrast, the iteration bound $k$ considered in Thm.~\ref{thm:QkLambda1PlusEpsilonCmaxLambda} will always decrease with $\lambda$ (as apparent from Tab.~\ref{tab:kThmExample}.(a)).
This behavior can be explained with regard to Cor.~\ref{cor:choiceOfEta} and Lem.~\ref{lem:appropriateKCD}.
Clearly, for larger $\lambda$, $\hat{\varrho}$ as in~\eqref{eq:choiceOfRho} will be larger, which results in smaller $\eta = 1 - \frac{\hat{\varrho}}{\ub{r}_x}$ and finally smaller $k$ satisfying~\eqref{eq:kGeg2Cases}.
While this behavior may be conservative (as it is for this example), it is required to prove Thm.~\ref{thm:muC1MaxSubsetCLambdaMax} and the underlying Lem.~\ref{lem:muLambdaK}. Indeed, the strategy to handle case (ii) in the proof of Lem.~\ref{lem:muLambdaK} builds on the fact that the computed $\eta$ for some $\lambda \in (\lambda^\ast,1]$ is smaller than the one for $\lambda=\lambda^\ast$.

\subsection{Discussion of Theorem~\ref{thm:muC1MaxSubsetCLambdaMax}}
\label{subsec:ExampleThm12}

Theorem~\ref{thm:muC1MaxSubsetCLambdaMax} allows  $\lambda$ to be chosen such that \eqref{eq:muC1MaxSubsetCLambdaMax} is guaranteed to hold for a given $\mu$. In the following, we compare the  smallest value of $\lambda$ such that~\eqref{eq:muC1MaxSubsetCLambdaMax} holds with the value that is obtained using Lem.~\ref{lem:muLambdaK} for the example from Sect.~\ref{subsec:ExThm9} and $n=1$. 


Assume we want to  satisfy~\eqref{eq:muC1MaxSubsetCLambdaMax} for $\mu=\frac{5}{6}$. Before applying Thm.~\ref{thm:muC1MaxSubsetCLambdaMax} (and Lem.~\ref{lem:muLambdaK}), first note that the maximal controlled invariant set is
$\C_{\max}^1 = [-10, 10]$ (according to~\eqref{eq:CMaxLambdaExample} with $\lambda=1$).
Obviously, $\mu \,\C_{\max}^1 \subseteq \C_{\max}^\lambda$ requires
\begin{equation}
\label{eq:muLambdaCondition}
10 \,\mu = \frac{50}{6} \leq \frac{1}{1.1-\lambda} \quad \text{or, equivalently,} \quad \frac{49}{50}\leq \lambda.
\end{equation}
Thus, $\lambda^\ast = 0.98$ is the smallest choice for $\lambda$ such that~\eqref{eq:muC1MaxSubsetCLambdaMax} holds. 

The computation of a suitable $\lambda$ according to Thm.~\ref{thm:muC1MaxSubsetCLambdaMax} (and Lem.~\ref{lem:muLambdaK}) involves finding $\lambda$ and $k$
 such that~\eqref{eq:QkLambdaXSubsetQkLambdaC} and~\eqref{eq:muLambdaK} hold for the choice
$$
\epsilon = \frac{1-\mu}{2\,\mu}=\frac{1}{10}=0.1.
$$
In this context, Thm.~\ref{thm:muC1MaxSubsetCLambdaMax} and Lem.~\ref{lem:muLambdaK} require the knowledge of a $\lambda^\ast$-contractive set $\C$. We again consider the set $\C=[-2,2]$ from Sect.~\ref{subsec:ExThm9}, which is $\lambda$-contractive for every $\lambda \in [0.6,1]$.
We use $\lambda^\ast=0.98$ from above as an initial guess for the computation of a suitable $\lambda$ corresponding to the proof of Lem.~\ref{lem:muLambdaK}. In other words, we first analyze whether the presented procedure is capable of identifying whether $\lambda^\ast$ is suitable.
Clearly, the smallest $k$ satisfying~\eqref{eq:kGeg2Cases} for $\D=\X$ and $\delta = \ln(1+\epsilon)$
can be computed analogously to Sect.~\ref{subsec:ExThm9}. 
Hence, evaluating~\eqref{eq:kThmExample} for $n=1$, $\epsilon=0.1$, and $\eta$ from \eqref{eq:etaExample} yields $k=30$ (as itemized in Tab.~\ref{tab:kThmExample}.(a)).
We obtain
$$1+\mu \approx 1.8333 > 1.0910 \approx 2 \cdot 0.98^{30} = 2 \, (\lambda^{\ast})^k,$$
i.e.,~\eqref{eq:muLambdaK} does not hold for the choice $\lambda=\lambda^\ast$ and we have to address case (ii) in the proof of Lem.~\ref{lem:muLambdaK}. Consequently, updating $\lambda$ according to
~\eqref{eq:lambdaUpdate} yields
$\lambda \approx 0.9971$.
Following the argumentation in the proof of Lem.~\ref{lem:muLambdaK},
the updated $\lambda$ and $k=30$ are such that~\eqref{eq:QkLambdaXSubsetQkLambdaC} and~\eqref{eq:muLambdaK} hold. 
Thus, according to Thm.~\ref{thm:muC1MaxSubsetCLambdaMax}, the updated $\lambda$ is such that~\eqref{eq:muC1MaxSubsetCLambdaMax} holds. 
The computed $\lambda$ is conservative in the sense that
$$
\frac{\lambda-\lambda^\ast}{1-\lambda^\ast} = 0.8552 = 85.52 \%
$$
of the ``suitable interval'' $[0.98,1)$ is not identified as being suitable.
However, the result can also be interpreted in a different way.
To this end, we compute $\lambda$-contractive sets $\T$ that accurately approximate $\C_{\max}^1$ according to the two strategies in Rem.~\ref{rem:ComputationLambdaSetPractice}. Using the first strategy, we obtain $\T=\Q_{30}^{\lambda}(\C)$ based on the iteration bound $k=30$. The second strategy leads to an earlier termination after $k^\ast=23$ iterations (according to~\eqref{eq:kExact}).
This observation is interesting, since any choice $\lambda \in [0.98,0.9971]$ requires at least 21 iterations to satisfy~\eqref{eq:QkLambdaXSubsetQkLambdaC} with $\epsilon = 0.1$.
In other words, the conservatism in the choice of $\lambda$ only slightly influences the earliest satisfaction of~\eqref{eq:QkLambdaXSubsetQkLambdaC}.


 \subsection{Discussion of Theorem~\ref{thm:dQnCDLeqEtaDCD}}
\label{subsec:ExampleThm2}
  
Theorems~\ref{thm:QkLambda1PlusEpsilonCmaxLambda} and \ref{thm:muC1MaxSubsetCLambdaMax} both build on the contraction property in Thm.~\ref{thm:dQnCDLeqEtaDCD}.   
It thus makes sense to discuss Thm.~\ref{thm:dQnCDLeqEtaDCD} in more detail.

First, it is important to note that the contraction property in Thm.~\ref{thm:dQnCDLeqEtaDCD} only applies to the mapping $\Q_n^\lambda(\C)$, where $n$ refers to the state space dimension. Initially, this seems counter-intuitive and one would expect a contraction after every step $k$. In fact, the example discussed in Sect.~\ref{subsec:ExThm9} (and Sect.~\ref{subsec:ExampleThm12}) shows such a behavior. There exist, however, situations where a contraction indeed only appears every $n$ steps. In this context, consider system~\eqref{eq:linSys} with
$$
A=\begin{pmatrix}
\textcolor{white}{+}0 & 1 \\ -1 & 0 
\end{pmatrix} \quad \text{and} \quad
B=\begin{pmatrix}
0 \\ 1 
\end{pmatrix} 
$$
and constraints $\X=[-5,5]^2$ and $\U=[-1,1]$ (which is taken from \cite[Sect. IV-B]{SchulzeDarup2014_CDC}). We will show that, for the choice $\C=[-1,1] \times [-1,1]$ and $\D=[-2,2] \times [1,1]$,
we obtain
\begin{align}
\nonumber
d(\Q_{2j+1}^\lambda(\C),\Q_{2j+1}^\lambda(\D))&=d(\Q_{2j}^\lambda(\C),\Q_{2j}^\lambda(\D))\\
\label{eq:dQCD2j}
&=  \ln \left(\frac{\lambda^{2j}}{\sum_{i=0}^j \lambda^{2i}}+1\right)
\end{align}
for every $\lambda \in (0,1]$ and every $j \in \Nat_{[0,3]}$ (for $j> 3$, i.e., $k > 7$, the state constraints $\X$ may, depending of the choice of $\lambda$, affect the shapes of $\Q_{k}^\lambda(\C)$ and $\Q_{k}^\lambda(\D)$ so that~\eqref{eq:dQCD2j} may no longer hold).
Now, according to~\eqref{eq:dQCD2j}  for $j=0$, we find 
$$
d(\Q_{1}^\lambda(\C),\Q_{1}^\lambda(\D))=d(\Q_{0}^\lambda(\C),\Q_{0}^\lambda(\D))=d(\C,\D)=\ln(2).
$$
In other words, \eqref{eq:weakerRelation} holds for $k=1<n$ with equality (in agreement with Cor.~\ref{cor:dQkLambdaLeqD}) but there is no contraction in terms of the distance between the sets after one iteration.
For $k=n=2$, relation~\eqref{eq:dQnCDLeqEtaDCD} can, however, be easily satisfied for the choice  $\eta =1-\frac{\lambda}{\sqrt{50}}$. This follows from Cor.~\ref{cor:choiceOfEta} with
 $\lb{r}_x=5$, $\ub{r}_x=\sqrt{50}$, $\lb{r}_u=1$, $\alpha=1$, 
$\Phi_n=I_2$, $\sigma_{\max}(\Phi_n)=\sigma_{\min}(\Phi_n)=1$, and 
$\hat{\varrho}= \lambda \min \left\{ \frac{\lb{r}_x}{2},\, \lb{r_u}  \right\} = \lambda.$
To prove~\eqref{eq:dQCD2j}, finally note that $\Q_1^\lambda(\T)$ 
evaluates to
\begin{equation}
\label{eq:Q1TExample}
\Q_1^\lambda(\T) = [-\lambda\,\tau_2 - 1,\lambda\,\tau_2 + 1] \times [-\lambda\,\tau_1,\lambda\,\tau_1]
\end{equation}
for any set 
$\T=[-\tau_1,\tau_1] \times [-\tau_2,\tau_2] $
with $\tau_1 \in (0,5]$ and $\tau_2 \in (0,4]$.
Equation~\eqref{eq:Q1TExample} allows to compute $\Q_{k}^\lambda(\C)=\Q_{1}^\lambda(\Q_{k-1}^\lambda(\C))$ and $\Q_{k}^\lambda(\D)=\Q_{1}^\lambda(\Q_{k-1}^\lambda(\D))$ for every $k\in \Nat_{[1,7]}$ (for $k>7$, the conditions on $\tau_1$ and $\tau_2$ may be violated for $\T=\Q_{k-1}(\C)$ or $\T=\Q_{k-1}(\D)$).
Afterwards, \eqref{eq:dComputation} can be used to evaluate the distances $d(\Q_{k}^\lambda(\C),\Q_{k}^\lambda(\D))$. 
Identifying relation~\eqref{eq:dQCD2j} is then straightforward.

Another important limitation of Thm.~\ref{thm:dQnCDLeqEtaDCD} is that the pair $(A,B)$ has to be controllable (see Assum.~\ref{assum:AB}). Clearly, it would be desirable to extend the contraction property to systems that are ``only'' stabilizable.
However, a simple extension is not possible as the following example shows.
Consider system~\eqref{eq:linSys} with
system matrices $A=0.8 $ and $B=0$ and constraints $\X=[-5,5]$ and $\U=[-1,1]$. Note that the pair $(A,B)$ is stabilizable but not controllable. 
We show in the following that, for the choice $\C=[-1,1]$ and $\D=[-2,2]$, we obtain
\begin{equation}
\label{eq:dQkEqualD0}
d(\Q_k^\lambda(\C),\Q_k^\lambda(\D)) = d(\C,\D) = 1
\end{equation}
for every $k \in \Nat_{[0,4]}$ and every $\lambda \in (0,1]$.
Obviously, since~\eqref{eq:dQkEqualD0} applies for $k=n=1$, relation~\eqref{eq:dQnCDLeqEtaDCD}  cannot hold with $\eta<1$ for some stabilizable systems.
In other words, Thm.~\ref{thm:dQnCDLeqEtaDCD} can in this form not be extended to those systems.
 Corollary~\ref{cor:dQkLambdaLeqD}, which does not require controllability of $(A,B)$, is however consistent with~\eqref{eq:dQkEqualD0}.
To show~\eqref{eq:dQkEqualD0}, note that $\Q_1^\lambda(\T)$ 
evaluates to  $\Q_1^\lambda(\T)=[-\frac{\lambda\,\tau}{0.8},\frac{\lambda\,\tau}{0.8}]$ for any set $\T=[-\tau,\tau] $ with $\tau \in (0,4]$.
Computing $\Q_k^\lambda(\C)$ and $\Q_k^\lambda(\D))$ accordingly and evaluating $d(\Q_k^\lambda(\C),\Q_k^\lambda(\D)$ as in~\eqref{eq:dComputation} leads to~\eqref{eq:dQkEqualD0}. 
We finally note that~\eqref{eq:dQkEqualD0} holds for every $k\in \Nat$ in case of $\lambda \in (0,0.8]$. For $\lambda \in (0.8,1]$, however,  $\Q_k^\lambda(\C)$ and $\Q_k^\lambda(\D)$ converge to the state constraints $\X$, which eventually results in violation of~\eqref{eq:dQkEqualD0}.

\section{Conclusion}

The paper presented two interesting  results related to the computation of $\lambda$-contractive sets for linear constrained systems. 
First, we showed that it is possible to a priori compute a number of iterations $k$ that is sufficient to approximate the largest $\lambda$-contractive set $\C_{\max}^\lambda$ with a given precision $\epsilon$ using the sequence~\eqref{eq:QkLambdaSequence}. Formally, this result is summarized in Thm.~\ref{thm:QkLambda1PlusEpsilonCmaxLambda}.
Second, we showed in Thm.~\ref{thm:muC1MaxSubsetCLambdaMax} how to compute a suitable $\lambda$ such that the associated maximal $\lambda$-contractive set is guaranteed to approximate the maximal controlled invariant set $\C_{\max}^1$  with a given accuracy.
The statements in Thms.~\ref{thm:QkLambda1PlusEpsilonCmaxLambda} and~\ref{thm:muC1MaxSubsetCLambdaMax} were illustrated with an example.
As one might expect, we found that the computed iteration bound $k$ and the provided choice for $\lambda$ are valid but conservative.
Nevertheless, the procedure for a suitable choice of $\lambda$  guaranteeing~\eqref{eq:muC1MaxSubsetCLambdaMax} might be useful for practical computations of $\lambda$-contractive sets since the conservatism in $\lambda$ only slightly influences the termination of step-set based approximations of $\C_{\max}^\lambda$ (see the example in Sect.~\ref{subsec:ExampleThm12}).

Theorems~\ref{thm:QkLambda1PlusEpsilonCmaxLambda} and~\ref{thm:muC1MaxSubsetCLambdaMax} both build on the contraction property summarized in Thm.~\ref{thm:dQnCDLeqEtaDCD} and 
the iteration bound introduced in Lem.~\ref{lem:appropriateKCD}.
The statements in Thm.~\ref{thm:dQnCDLeqEtaDCD} and Lem.~\ref{lem:appropriateKCD}
require the pair $(A,B)$ to be controllable (see Assum.~\ref{assum:AB}) and this restriction is passed on to Thms.~\ref{thm:QkLambda1PlusEpsilonCmaxLambda} and~\ref{thm:muC1MaxSubsetCLambdaMax}. Clearly, it would be desirable to extend all statements to systems that are ``only'' stabilizable.
It was, however, shown that there exist stabilizable systems for which the statement in Thm.~\ref{thm:dQnCDLeqEtaDCD} does not apply (see the latter example in Sect.~\ref{subsec:ExampleThm2}).
Nevertheless, there is no fundamental argument against the extension of Thms.~\ref{thm:QkLambda1PlusEpsilonCmaxLambda} and~\ref{thm:muC1MaxSubsetCLambdaMax} to stabilizable systems. Consequently, future work has to address these non-trivial extensions in order to complete the theory.

\section*{Acknowledgment}

We thank the anonymous reviewers for their helpful comments and suggestions. Financial support by the German Research Foundation
(DFG) through the grant SCHU 2094/1-1 is gratefully
acknowledged.

\appendix

\section{Additional proof}

\begin{proof}[Proof of Lem.~\ref{lem:dCDepsilon}]
The proof consists of three parts addressing (i) ``$\Longrightarrow$''  in~\eqref{eq:dImplications},  (ii) ``$\Longleftarrow$''  in~\eqref{eq:dImplications}, and (iii) relation~\eqref{eq:dComputation}. As a preparation, note that $\rho(\xi,\C)\leq \rho(\xi,\D)$ for every $\xi \in \S$ due to $\C \subseteq \D$.

Part (i).  Having $\D \subseteq \exp(\delta) \,\C$ implies $\rho(\xi,\D) \leq \rho(\xi,\exp(\delta)\,\C)$ for every $\xi \in \S$. We further obtain $\rho(\xi,\exp(\delta)\,\C)= \exp(\delta)\,\rho(\xi,\C)$ for every $\xi \in \S$ by definition of $\rho(\cdot,\cdot)$. Thus 
$$d(\C,\D) = \sup_{\xi \in \S} \, \ln \left(\frac{\rho(\xi,\D)}{\rho(\xi,\C)} \right)  \leq \ln(\exp(\delta))=\delta.
$$ 

Part (ii). Assume $d(\C,\D) \leq \delta$ but $\D \nsubseteq \exp(\delta) \,\C$. Then, there exists an $x \neq 0$ such that $x\in \D$ but $x \notin \exp(\delta)\,\C$. Define $\xi^\ast := \frac{x}{\|x\|_2}$ and note $\xi^\ast \in \S$. Clearly, $\rho(\xi^\ast,\D)>\rho(\xi^\ast,\exp(\delta) \,\C)= \exp(\delta)\,\rho(\xi^\ast,\C)$. This, however, contradicts $d(\C,\D) \leq \delta$ since 
$$\ln \left(\frac{\rho(\xi^\ast,\D)}{\rho(\xi^\ast,\C)}\right) > \ln(\exp(\delta))=\delta.
$$

Part (iii). Let $\mu^\ast := \arg\min_\mu \ln(\mu)$ s.t. $\D \subseteq \mu \,\C$ and define $\delta^\ast := \ln(\mu^\ast)$. Then, we have $\D \subseteq  \exp(\delta^\ast) \, \C$ and consequently $d(\C,\D) \leq \delta^\ast$ according to part (i) of the proof. Now, assume $d(\C,\D)=\delta<\delta^\ast$. Then, $\D \subseteq \exp(\delta) \,\C$ according to part (ii). This, however, contradicts $\mu^\ast$ being the optimizer of~\eqref{eq:dComputation} since $\mu\!=\!\exp(\delta)\!<\!\mu^\ast$. Thus, $d(\C,\D)\!=\!\delta^\ast$ in accordance with~\eqref{eq:dComputation}.
\end{proof}

\end{document}